\theoremstyle{plain}
\newtheorem{theorem}{Theorem}[section]
\newtheorem{proposition}[theorem]{Proposition}
\newtheorem{lemma}[theorem]{Lemma}
\newtheorem{notation}[theorem]{Notation}
\theoremstyle{definition}
\newtheorem{definition}[theorem]{Definition}
\newtheorem{example}[theorem]{Example}
\newtheorem{remark}[theorem]{Remark}
\newcommand{\sheaf}[1]{\mathscr{#1}}
\newcommand{\KK}{\sheaf{K}}
\newcommand{\LL}{\sheaf{L}}
\newcommand{\OO}{\sheaf{O}}
\newcommand{\EE}{\sheaf{E}}
\newcommand{\FF}{\sheaf{F}}
\newcommand{\GG}{\sheaf{G}}
\newcommand{\HH}{\sheaf{H}}
\newcommand{\NN}{\sheaf{N}}
\newcommand{\VV}{\sheaf{V}}
\newcommand{\WW}{\sheaf{W}}
\newcommand{\PP}{\sheaf{P}}
\newcommand{\II}{\sheaf{I}}
\newcommand{\TT}{\sheaf{T}}
\newcommand{\CC}{\sheaf{C}}
\newcommand{\A}{\mathbb A}
\newcommand{\C}{\mathbb C}
\renewcommand{\P}{\mathbb P}
\newcommand{\Q}{\mathbb Q}
\newcommand{\N}{\mathbb{N}}
\newcommand{\F}{\mathbb{F}}
\DeclareMathOperator{\gin}{\mathrm{gin}}
\DeclareMathOperator{\initialTerm}{\mathrm{in}}
\begin{document}

\title[Degenerations of Gushel-Mukai fourfolds]{Degenerations of Gushel-Mukai fourfolds, with a view towards irrationality proofs}

%\author[Auel]{Asher Auel}
%\address{Asher Auel, Department of Mathematics\\
%Yale University\\
%10 Hillhouse Avenue\\
%New Haven, CT 06511, USA}
%\email{auel@yale.edu}

\author[B\"ohning]{Christian B\"ohning}
\address{Christian B\"ohning, Mathematics Institute, University of Warwick\\
Coventry CV4 7AL, England}
\email{C.Boehning@warwick.ac.uk}

\author[Bothmer]{Hans-Christian Graf von Bothmer}
\address{Hans-Christian Graf von Bothmer, Fachbereich Mathematik der Universit\"at Hamburg\\
Bundesstra\ss e 55\\
20146 Hamburg, Germany}
\email{hans.christian.v.bothmer@uni-hamburg.de}

%\thanks{$^1$ Supported by Heisenberg-Stipendium BO 3699/1-2 of the DFG (German Research Foundation) during the initial stages of this work}
%\thanks{$^2$ Partially supported by the RTG 1670 of the  DFG (German Research Foundation)}

\begin{abstract}
We study a certain class of degenerations of Gushel-Mukai fourfolds as conic bundles, which we call tame degenerations and which are natural if one wants to prove that very general Gushel-Mukai fourfolds are irrational using the degeneration method due to Voisin, Colliot-Th\'{e}l\`{e}ne-Pirutka, Totaro et al. However, we prove that no such tame degenerations do exist. 
\end{abstract}

\maketitle

\section{Introduction}\label{sIntroduction}

We work over the complex numbers $\C$ unless otherwise stated.

\begin{definition}\label{dGushelMukai}
We fix a five-dimensional vector space $W$. A \emph{Gushel-Mukai fourfold} (\emph{GM fourfold} for short) $X$ is a smooth dimensionally transverse 
intersection
\[
X = Q \cap \mathrm{Gr}(2, W) \cap H
\]
of the Grassmannian $\mathrm{Gr}(2, W) \subset \P (\Lambda^2 W)$, a hyperplane $H$ and a quadric $Q$ in $\P (\Lambda^2 W)$.
\end{definition}

It is a very interesting open problem to decide whether a very general GM fourfold $X$ is rational or not. Part of the interest of this question comes from the conjectural similarity of the picture for GM fourfolds to the picture for cubic fourfolds. Also, GM fourfolds have recently been studied from various perspectives, geometric, Hodge theoretic and derived categorical \cite{DK15-1}, \cite{DK16-1} \cite{KP16}. One may ask whether the very general GM fourfold is even not stably rational, and then one can seek to apply the degeneration method of Voisin, Colliot-Th\'{e}l\`{e}ne-Pirutka, Totaro et al. \cite{Voi15}, \cite{CT-P16}, \cite{To16} that has led to such a multitude of applications recently. In fact, GM fourfolds are birational to a certain class of conic bundles over $\P^3$ with sextic discriminant surfaces, see Proposition \ref{pGMasConicBundle}. This has been known for a while and can be extracted from the work by Debarre and Kuznetsov cited before. Now the main theorem, Theorem 2.6, of \cite{ABBP16}, has the following direct consequence when combined with the specialization principle in \cite{CT-P16}. This special case of \cite[Thm. 2.6]{ABBP16} is all that is relevant for us here.

\begin{theorem}\label{tDegConicBundles}
Let $\EE_t$ be a family of rank $4$ vector bundles over the base variety $\P^3$, parametrized by $t\in \A^1$, in other words, a vector bundle $\EE$ on $\P^3\times \A^1$. Let $\Phi \colon \EE \to \EE^{\vee} \otimes \LL$, for $\LL$ some line bundle on $\P^3\times \A^1$, be a morphism, symmetric up to twist by the line bundle, such that its degeneracy locus $\CC \subset \P (\EE ) \to \P^3\times \A^1$ defines a flat family of conic bundles over $\P^3$, or, equivalently, that the rank of the quadratic forms defined by $\Phi$ on the fibers of $\EE$ never drops to zero. We denote by $\CC_t$ the conic bundle over $\P^3$ in $\P (\EE_t)$ corresponding to $t\in \A^1$. Suppose that for general $t$, $\CC_t$ is smooth. Let $\Delta$ be the discriminant of $\CC_0$, the possibly reducible surface in $\P^3$ above points of which the fibers of $\CC_0$ are singular conics. Suppose that the following are true:
\begin{enumerate}
\item
$\Delta$ breaks up into two irreducible components $\Delta_1$ and $\Delta_2$, and the fibers of $\CC_0$ over general points in $\Delta_1$ and $\Delta_2$ consist of two distinct lines (not a double line).
\item
The double covers $\tilde{\Delta}_1 \to \Delta_1$ and $\tilde{\Delta}_2 \to \Delta_2$ determined by $\CC_0$ are irreducible, or, put differently, do not split.
\item
Let $C_j$ be the irreducible components of the intersection curve $\Delta_1\cap \Delta_2$. Then the fibers of $\CC_0$ over a general point in each $C_j$ are two distinct lines, and the corresponding double covers $\tilde{C}_j\to C_j$ split.
\item
$\Delta_1$ and $\Delta_2$ are smooth along every $C_j$.
\item
The total space $\CC_0$ is only mildly singular in the sense that it should have a Chow universally trivial resolution $\varpi \colon \tilde{\CC}_0 \to \CC_0$, which means that for any overfield $K$ of $\C$, the pushforward $\varpi_*$ gives an isomorphism of Chow groups between $\mathrm{CH}_0((\tilde{\CC}_0)_K)$ and $\mathrm{CH}_0 ((\CC_0)_K)$.
\end{enumerate}
Then $\tilde{\CC}_0$ has a nontrivial unramified Brauer group, and for very general $t\in \A^1$, $\CC_t$ is not stably rational.
\end{theorem}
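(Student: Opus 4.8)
The plan is to establish the two assertions in turn: first that the smooth fourfold $\tilde{\CC}_0$ carries a nonzero unramified Brauer class, and then that this class obstructs stable rationality of the very general $\CC_t$ via the specialization method. Throughout I identify the $2$-torsion of the unramified Brauer group with unramified cohomology $H^2_{\mathrm{nr}}(\C(\CC_0),\Z/2)$. Since hypothesis (5) furnishes a resolution $\varpi\colon \tilde{\CC}_0 \to \CC_0$, the fourfold $\tilde{\CC}_0$ is a smooth projective model of the function field $\C(\CC_0)$, whence $\Br_{\mathrm{nr}}(\tilde{\CC}_0) = H^2_{\mathrm{nr}}(\C(\CC_0),\Z/2)$; it therefore suffices to exhibit a nonzero class there, which I will produce from the conic bundle structure $\pi\colon \CC_0 \to \P^3$.

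Write $\alpha \in H^2(\C(\P^3),\Z/2)$ for the class of the generic conic, whose ramification is supported on $\Delta = \Delta_1 \cup \Delta_2$, with residue $\delta_i := \partial_{\Delta_i}(\alpha) \in H^1(\C(\Delta_i),\Z/2)$ the class of the double cover $\tilde{\Delta}_i \to \Delta_i$. The key tool is the Colliot-Th\'el\`ene--Ojanguren residue criterion: for $\beta \in H^2(\C(\P^3),\Z/2)$ the pullback $\pi^*\beta$ is unramified on $\CC_0$ precisely when $\partial_D(\beta) = 0$ for every prime divisor $D \not\subset \Delta$ and $\partial_{\Delta_i}(\beta) \in \{0,\delta_i\}$ for $i=1,2$, the latter being the condition that the residue die on the double cover that trivializes $\alpha$. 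I will build $\beta$ with the asymmetric residue datum $\partial_{\Delta_1}(\beta) = \delta_1$, $\partial_{\Delta_2}(\beta) = 0$, and no other ramification. That such a $\beta$ exists as a genuine class on the rational threefold $\P^3$ is governed by Faddeev reciprocity, i.e. by the exactness of the Gersten--Bloch--Ogus residue complex for $\P^3$: the required vanishing of the iterated residues $\partial_{C_j}(\delta_1)$ along the curves $C_j = \Delta_1 \cap \Delta_2$ is exactly guaranteed by hypothesis (3), that $\tilde{\Delta}_1 \to \Delta_1$ is unramified and split over each $C_j$, while hypothesis (4) ensures that $\Delta_1$ is smooth along $C_j$ so that these residue maps are the classical ones and no spurious ramification is introduced. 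I expect this realizability check to be the crux of the argument.

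Next I verify that $\pi^*\beta \neq 0$. By Amitsur's theorem the kernel of $\pi^*\colon H^2(\C(\P^3),\Z/2) \to H^2(\C(\CC_0),\Z/2)$ is exactly $\{0,\alpha\}$, so it suffices to check $\beta \notin \{0,\alpha\}$. This follows on comparing residues: $\beta$ is ramified along $\Delta_1$, so $\beta \neq 0$, but unramified along $\Delta_2$, whereas $\alpha$ has residue $\delta_2$ there; here hypothesis (2), the non-splitting (irreducibility) of $\tilde{\Delta}_1$ and $\tilde{\Delta}_2$, ensures $\delta_1,\delta_2 \neq 0$, so these residue patterns are genuinely distinct. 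Consequently $\pi^*\beta$ is a nonzero unramified class, giving $\Br_{\mathrm{nr}}(\tilde{\CC}_0) \neq 0$.

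Finally I invoke the specialization principle of Colliot-Th\'el\`ene--Pirutka. The family $\CC \to \A^1$ is flat with $\CC_t$ smooth for general $t$, and by hypothesis (5) the special fibre $\CC_0$ admits the Chow universally trivial resolution $\tilde{\CC}_0$. Were the very general $\CC_t$ stably rational -- or even merely retract rational -- its smooth model would be universally $\mathrm{CH}_0$-trivial; the specialization theorem then forces the resolution $\tilde{\CC}_0$ of the special fibre to be universally $\mathrm{CH}_0$-trivial as well, hence to have vanishing unramified Brauer group. This contradicts the previous paragraph, so the very general $\CC_t$ is not stably rational. The entire weight of the proof thus rests on the Brauer computation, and within it on checking the reciprocity and realizability of the asymmetric class $\beta$ at the intersection curves $C_j$, where hypotheses (2)--(4) are consumed.
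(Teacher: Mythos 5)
The paper itself offers no proof of this statement: it is imported verbatim as a special case of \cite[Thm.~2.6]{ABBP16} combined with the specialization principle of \cite{CT-P16}, so the only meaningful comparison is with the argument of those references. Your skeleton is the right one and matches theirs: produce $\beta\in\Br(\C(\P^3))[2]$ with the asymmetric residue profile $(\delta_1,0)$, check $\pi^*\beta$ is unramified, use Amitsur plus hypothesis b) to see $\beta\notin\{0,\alpha\}$, and conclude by specialization of the decomposition of the diagonal. The final specialization paragraph is fine.

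The genuine gap is in the unramifiedness check. You verify the residue conditions only at the prime divisors of $\P^3$, but ``unramified'' for $\pi^*\beta$ on the smooth model $\tilde{\CC}_0$ means triviality of residues at \emph{all} divisorial valuations of $\C(\CC_0)$, hence (via the Colliot-Th\'el\`ene--Ojanguren reduction) at all rank-one discrete valuations of $\C(\P^3)$, including those whose centre on $\P^3$ is a curve or a point. The dangerous ones are exactly the valuations centred on the intersection curves $C_j$ (and on the singular points of the $\Delta_i$, where the covers are allowed to ramify). At the exceptional divisor $E_j$ of the blow-up of $C_j$, since $\Delta_1,\Delta_2$ are smooth along $C_j$ (hypothesis d)), one computes $\partial_{E_j}(\beta)=\delta_1|_{C_j}$ (the \emph{restriction}, not the residue, of $\delta_1$), while $\partial_{E_j}(\alpha)=\delta_1|_{C_j}+\delta_2|_{C_j}=0$ because both covers restrict over $C_j$ to the same double cover $\tilde C_j\to C_j$. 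So the option ``$\partial\beta=\partial\alpha\neq 0$'' is unavailable there, and the only way $\pi^*\beta$ survives is if $\delta_1|_{C_j}=0$, i.e.\ precisely if $\tilde C_j\to C_j$ splits. This is where the splitting half of hypothesis c) is consumed, and you omit this check entirely. Relatedly, you misattribute that hypothesis to the Faddeev/Bloch--Ogus realizability of $\beta$: what reciprocity needs is the vanishing of the \emph{second residues} $\partial_{C_j}(\delta_1)$, i.e.\ that $\tilde\Delta_1\to\Delta_1$ is \'etale over the generic point of each $C_j$, which follows from the first half of c) (two distinct lines over general points of $C_j$) together with d) -- the splitting of $\tilde C_j\to C_j$ is a strictly stronger, and different, condition. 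Without the codimension-$\ge 2$ analysis the proof of nontriviality of $\Br_{\mathrm{nr}}(\tilde{\CC}_0)$ is incomplete.
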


The first problem we consider in this article is if this theorem is applicable to the conic bundles arising from GM fourfolds, and our answer will be negative if one restricts attention to a rather natural class of ``tame degenerations". 

Since explaining the naturality of this class of degenerations from the point of view of Theorem \ref{tDegConicBundles} requires setting up more notation and a somewhat more detailed analysis of the class of conic bundles arising from GM fourfolds, we cannot do this in this Introduction but have to postpone it to Section \ref{sGMConicBundles}, Definitions \ref{dAdmissibleDegeneration2}, \ref{dTameDegeneration2} and the discussion following Remark \ref{rDiscriminant}. Suffice it to say that the main geometric tool in analysing tame degenerations is the fact that the discriminant of such a conic bundle is contact to a Kummer surface $K$. These are $16$ nodal quartic surfaces in $\P^3$. 

A wider class of degenerations can be obtained by letting such a $K$ degenerate to a null-correlation quartic with worse singularities, see Definition \ref{dKummerETC}. We should point out that we carried out a computer search for such more general degenerations involving null-correlation quartics with worse singularities, but did not find any examples where Theorem \ref{tDegConicBundles} is applicable, either. Another approach could of course be to consider other types of degenerations of GM fourfolds that are not related to their birational models as conic bundles, but to some other geometric structure they might carry.

\medskip

\textbf{Acknowledgments.} We would like to thank Asher Auel, Alexander Kuznetsov, Alena Pirutka, Kristian Ranestad for useful discussions and for drawing our attention to this interesting problem. 

It preparation for this article we did extensive computer experiments using the computer algebra system {\tt Macaulay2} \cite{M2} and Jakob Kr\"oker's  {\tt Macaulay2}-packages {\tt BlackBoxIdeals} and {\tt FiniteFieldExperiments} \cite{Kroe}.

\section{Gushel-Mukai fourfolds as conic bundles over $\P^3$ and tame degenerations}\label{sGMConicBundles}

Gushel-Mukai fourfolds as in Definition \ref{dGushelMukai} are birational to conic bundles over $\P^3$ of a certain type which we now define.

\begin{definition}\label{dBundles}
Let $V$ be a four-dimensional vector space, and consider the bundle $\Omega^1 (2)=\Omega^1_{\P^3}(2)$ on $\P^3 = \P (V)$. A \emph{Gushel-Mukai vector bundle} (\emph{GM vector bundle}) $\EE_{\sigma}$ is the cokernel of a nowhere vanishing section 
\[
\sigma \colon \OO_{\P^3} \to \OO_{\P^3} (1) \oplus \Omega^1_{\P^3} (2).
\]
A \emph{null-correlation} bundle $\NN_{\bar{\sigma}}$ is the cokernel of a nowhere vanishing section 
\[
\bar{\sigma} \colon \OO_{\P^3} \to \Omega^1_{\P^3} (2).
\]
\end{definition}

\begin{proposition}\label{pGMasConicBundle}
A general GM fourfold $X$ is birational to a conic bundle 
\[
\xymatrix{
\CC_{\varphi, \sigma} \ar@{^{(}->}[r]\ar[rd] & \P (\EE_{\sigma}^{\vee})\ar[d]\\
  & \P^3
}
\]
associated to  a symmetric map
\[
\varphi \colon \EE_{\sigma}^{\vee } \to \EE_{\sigma}
\]
for some GM vector bundle $\EE_{\sigma}$. 
\end{proposition}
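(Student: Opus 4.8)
The plan is to translate the projective data of $X$ into linear algebra on $W$, to exhibit an explicit conic-bundle fibration over $\P^3$ by a projection, and then to match the resulting $\P^2$-bundle with $\P(\EE_\sigma^\vee)$ and the quadric with $\varphi$.

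First I would record the two pieces of data. The hyperplane $H \subset \P(\Lambda^2 W)$ corresponds to a nonzero alternating form $\omega \in \Lambda^2 W^\vee$, which for general $X$ has rank $4$ and one-dimensional radical. Cutting $\mathrm{Gr}(2,W)$ with $H$ produces the five-fold $M$ of $\omega$-isotropic $2$-planes, and $X = M \cap Q$. I then fix a general point $[w_0] \in \P(W)$ (not in the radical of $\omega$) and a complementary four-dimensional subspace $V \subset W$, so that $\P^3 = \P(V)$ is a hyperplane in $\P(W)$. The fibration is the rational map $\pi \colon M \dashrightarrow \P(V)$ sending a $2$-plane $U$ to the point $[\,U \cap V\,]$ where the line $\P(U) \subset \P(W)$ meets the hyperplane $\P(V)$. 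A direct computation identifies the fiber over $[v]$ with the set of isotropic $2$-planes containing $v$, namely $\P(v^{\perp}/\langle v \rangle) \cong \P^2$, where $v^{\perp}$ is the $\omega$-orthogonal of $v$ in $W$.

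The heart of the argument is to promote this fiberwise description to an isomorphism of $M$, over a dense open subset of $\P(V)$, with the projectivization of a rank-$3$ bundle, and to identify that bundle. As $[v]$ varies the quotients $v^{\perp}/\langle v \rangle$ assemble into a bundle whose formation is governed by the contraction $v \mapsto \omega(v,-)$, that is, by a map out of the tautological sub-bundle $\OO(-1) \hookrightarrow V \otimes \OO$; unwinding this against the Euler sequence $0 \to \Omega^1(2) \to V^\vee \otimes \OO(1) \to \OO(2) \to 0$ should present the bundle as the cokernel of a section $\sigma \colon \OO \to \OO(1) \oplus \Omega^1(2)$, so that $M$ is birational to $\P(\EE_\sigma^\vee)$. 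The two components of $\sigma$ are precisely the covector $\omega(w_0,-)|_V \in V^\vee = H^0(\OO(1))$ and the two-form $\omega|_V \in \Lambda^2 V^\vee = H^0(\Omega^1(2))$; the generality of $[w_0]$ ensures both that $\sigma$ is nowhere vanishing, so that $\EE_\sigma$ is a genuine GM vector bundle and not a null-correlation bundle $\NN_{\bar\sigma}$, and that $\pi$ has degree one. Carrying out this identification, including the bookkeeping of twists and of the indeterminacy locus of $\pi$ (the planes with $U \subset V$, and the special $\P^3 \subset M$ of planes through $w_0$), is the step I expect to be the main obstacle.

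Finally I would descend the quadric. Restricting the quadratic form defining $Q$ to the Pl\"ucker $\P^2 = \P(v^{\perp}/\langle v \rangle)$ over each $[v]$ produces a family of conics, i.e. a section of $\mathrm{Sym}^2 \EE_\sigma$, equivalently a symmetric morphism $\varphi \colon \EE_\sigma^\vee \to \EE_\sigma$; smoothness of a general $X$ forces the associated quadratic forms to have rank at least one everywhere, so their common zero locus is a genuine flat conic bundle $\CC_{\varphi,\sigma} \subset \P(\EE_\sigma^\vee)$. Since $X = M \cap Q$ and $\pi$ is birational onto $\P(\EE_\sigma^\vee)$, the induced map $X \dashrightarrow \CC_{\varphi,\sigma}$ is birational; a determinant count ($\det \EE_\sigma = \OO(3)$, so $\det \varphi$ is a section of $\OO(6)$) confirms the expected sextic discriminant and serves as a useful consistency check.
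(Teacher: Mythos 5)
Your proposal follows essentially the same route as the paper: there, too, the conic bundle structure is the projection $U\mapsto[U\cap V]$ onto a general $\P^3=\P(V)\subset\P(W)$ (packaged via the incidence correspondence $Z\subset\P(W)\times\mathrm{Gr}(2,W)$, with $p_1^{-1}(\P(V))$ a rational section of the $\P^1$-bundle $\CC_q\to X$), the fibers are the planes $\P(v^\perp/\langle v\rangle)$ cut out by $\sigma_h\in H^0(\Omega^1_{\P(W)}(2))=\Lambda^2W^\vee$, and $\EE_\sigma$ arises from the splitting $\Omega^1_{\P(W)}(2)|_{\P(V)}\simeq\Omega^1_{\P(V)}(2)\oplus\OO_{\P(V)}(1)$, which is exactly your decomposition of $\omega$ under $W=V\oplus\C w_0$.

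The one substantive step where your justification would not hold up is the flatness claim: smoothness of $X$ does \emph{not} force the quadratic forms to have rank $\ge 1$ on every fiber, since a smooth GM fourfold can contain a plane of the form $\P(v^\perp/\langle v\rangle)$, in which case the fiber over $[v]$ is that entire $\P^2$. What saves the statement is generality, and this is precisely where the paper does real work: it verifies (partly by a Macaulay2 computation) that for general $Q$ the rank stratification of the quadric fibration $Z\cap p_2^{-1}(Q)\to\P(W)$ has the expected codimensions (rank $\le 3$ on a sextic, $\le 2$ on a curve of degree $40$, $\le 1$ nowhere), and then a dimension count shows that a general $H$ contains no plane of a rank-$\le 2$ fiber and contains an entire fiber only over the radical point $P$, which is then avoided by choosing $\P(V)\not\ni P$. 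Two further small slips: the locus of isotropic planes through your (general) $w_0$ is a $\P^2$, not a $\P^3$ --- the $\P^3$ of planes occurs only over the radical of $\omega$, which is the point the paper excises; and the non-vanishing of $\sigma$ on $\P(V)$ is a condition on $V$ (that it miss the radical), not on $w_0$.
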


\begin{proof}
Place yourself in the set-up of Definition \ref{dGushelMukai}. 
The basic idea is to look at the incidence correspondence ``point on line"
\[
\xymatrix{
 & Z \ar@{^{(}->}[r]\ar[ld]^{p_1} \ar[rd]^{p_2} & \P (W) \times \mathrm{Gr}(2, W)\subset \P(W) \times \P (\Lambda^2 W) \\
 \P (W) &  & \mathrm{Gr}(2, W) 
}
\]
Via $p_1$, $Z$ becomes isomorphic to $\P (\TT_{\P(W)}) \simeq \P (\TT_{\P (W)} (-2))$. The hyperplane $H$ in the definition of a GM fourfold corresponds to a linear form (well-defined up to scalars), $h \in \Lambda^2 W^{\vee}$, and similarly the quadric $Q$ in the definition of GM fourfold is equivalent to the datum of a quadratic form (up to scalars) $q\in \mathrm{Sym}^2 \Lambda^2 W^{\vee}$. Since 
\[
H^0 (\P (W), \Omega^1_{\P(W)}(2))\simeq \Lambda^2 W^{\vee}
\]
a generic $h$ determines a section 
\[
\sigma_h \colon \OO_{\P(W)} \to \Omega^1_{\P(W)}(2)
\]
vanishing in a single point $P$ in $\P (W)=\P^4$, 
and a corresponding quotient sheaf $\VV_{\sigma_h}$, which is a bundle outside of that point $P$. We will now work on $\P (W)^0:= \P (W) -\{ P \}$. Then, we have an honest bundle $\VV_{\sigma_h}^0$ there, and dually, $(\VV_{\sigma_h}^0)^{\vee} \hookrightarrow \TT_{\P(W)^0}(-2)$ a subbundle. We also have a corresponding diagram
\[
\xymatrix{
 & Z^0 \ar@{^{(}->}[r]\ar[ld]^{p_1} \ar[rd]^{p_2} & \P (W)^0 \times \mathrm{Gr}(2, W)\subset \P(W)^0 \times \P (\Lambda^2 W) \\
 \P (W)^0 &  & \mathrm{Gr}(2, W) 
}
\]

The variety $Z(h)^0:=Z^0 \cap p_2^{-1}(\{ h=0\})$ is nothing but $\P ((\VV^0_{\sigma_h})^{\vee})$ via $p_1$. This is now a $\P^2$-bundle over $\P (W)^0 \simeq \P^4-\{ P\}$ via $p_1$. Now since
\[
H^0 (\P (W), \mathrm{Sym}^2 \Omega^1_{\P(W)}(2)) = \mathrm{Sym}^2 \Lambda^2 W^{\vee}
\]
the element $q$ gives a symmetric morphism of bundles
\[
\xymatrix{
(\Omega^1(2)_{\P(W)})^{\vee } \ar[r]^{q} & \Omega^1_{\P(W)}(2) \ar@{->>}[d] \\
\VV^{\vee}_{\sigma_h} \ar[u]\ar[r]^{\bar{q}}  &  \VV_{\sigma_h}
}
\]
Now if we consider $Z(h, q)^0:= Z(h)^0 \cap p_2^{-1}\{ q=0 \}$ we find that this is nothing but the conic bundle
\[
\xymatrix{
\{ q=0\}=\CC_{q} \ar@{^{(}->}[r]\ar[rd]^{p_1} & \P ((\VV^0_{\sigma_h})^{\vee} )\ar[d]^{p_1}\\
 & \P (W)^0
}
\]
Here we a priori allow the possibility that some fibers of $\CC_q$ may be entire $\P^2$'s, in other words, the whole fiber of $\P ((\VV^0_{\sigma_h})^{\vee} )$ at that point, but we will see that that does not happen for generic choices. However, the basic observation is now that via $p_2$ we see that 
\[
\xymatrix{
\CC_{q}\ar[d]^{p_2}\\
\mathrm{Gr}(2, W)\cap Q\cap H =: X
}
\]
is birational to the restriction to $X$ of the projectivization of the universal subbundle on $\mathrm{Gr}(2, W)$ (only birational since we removed $P$). However, this is just birationally a (Zariski-locally trivial) $\P^1$-bundle over $X$. Any section in it will hence be birational to $X$. Now a way of producing such a section is simply by choosing a generic four-dimensional subspace $V \subset W$ in the five-dimensional vector space $W$, and hence a $\P^3=\P(V) \subset \P (W)$ \emph{not containing $P$} (this is how we eventually get rid of the annoying point $P$ which we had to exclude up to now). Indeed, $p_1^{-1}(\P (V))\subset \CC_{q}$ will then be a rational section of $p_2$ because the fiber inside $\CC_q$ over a point in $X$ of $p_2$ is just the line in $\P (W)$ corresponding to it, and intersecting that line with $\P (V)\simeq \P^3$ (generically) picks a point on that line. Hence, if we simply restrict $\CC_q$ to a generic hyperplane $\P (V)\subset \P (W)$:
\[
\xymatrix{
\CC_{\P^3} := \CC_{q}\mid_{\P (V)}\ar[d]^{p_1}\\
\P (V) \simeq \P^3
}
\]
we get a conic bundle birational to $X$. Moreover, it can be checked that for general $\P (V)\subset \P (W)$ and general $H, Q$, this is indeed a ``true" conic bundle, by which we mean that it is a flat projective surjective morphism all of whose fibers are isomorphic to plane conics with general fibers smooth, as follows:  we know that the incidence correspondence $Z$ is a $\P^3$ bundle over $\P (W)$. Using Macaulay2 \cite{BB17-M2} one can show that there are quadrics $Q \subset \P (\Lambda^2 W)$ such that $Z \cap p_2^{-1}(Q\cap \mathrm{Gr}(2, W))$ is a quadric fibration over $\P (W)$ with the properties: 
\begin{enumerate}
\item
The rank of the quadrics is $\le 3$ on a degree $6$ hypersurface; 
\item
it is $\le 2$ on a curve of degree $40$; 
 \item
it is $\le 1$ nowhere 
\end{enumerate}
(indeed, this is the generic behavior). Hence, if we intersect each fiber with a hyperplane $H \subset \P (\Lambda^2 (W))$, the types of intersection behavior we have to avoid to get a flat conic bundle are:
\begin{enumerate}
\item
$H$ contains an entire $\P^3$ fiber.
\item
$H$ contains a $\P^2$ in fibers where the quadrics have rank $2$. 
\end{enumerate}
Now, counting dimensions, the $H$'s containing a given $\P^2$ are codimension $3$ in their parameter space, whence the $H$'s containing a $\P^2$ in some fiber where the quadric has rank $2$ is at least codimension $2$ in $\P (\Lambda^2 W^{\vee})$ for general choice of $Q$ by the above calculation. On the other hand, the codimension of the $H$'s containing an entire $\P^3$ fiber is $-4 + 4=0$, and we know this happens exactly over the one point $P$ for a general choice. Hence choosing $H$ general, and then choosing a $\P (V) \subset \P (W)$ that avoids the point $P$, we get a flat conic bundle.

\medskip

To conclude the proof it remains to remark that 
\[
\Omega^1_{\P(W)}(2)\mid_{\P(V)} \simeq \Omega^1_{\P (V)}(2) \oplus \OO_{\P(V)}(1)
\]
(use the Euler sequence). Hence $\VV_{\sigma_h}$ restricts to a Gushel-Mukai vector bundle on $\P (V)$. 
\end{proof}

\begin{remark}\label{rDiscriminant}
It is known that the discriminants of the conic bundles $\CC_{\P^3}$ appearing in the proof of Proposition \ref{pGMasConicBundle} are codimension two linear sections of certain sextic fourfolds in $\P^5$, known as Eisenbud-Popescu-Walter sextics. See \cite{EPW01}, \cite{DK15-1}, \cite{DK16-1}, \cite{OGra06}. Thus the discriminants of the conic bundles $\CC_{\P^3}$ are certain nodal sextic surfaces.
\end{remark}

From the point of view of the conditions of Theorem \ref{tDegConicBundles}, we have to look for degenerations of GM conic bundles where the discriminant $\Delta$ breaks up into two cubic surfaces $\Delta_1$ and $\Delta_2$. This is so because requirements b) and c) in Theorem \ref{tDegConicBundles}, together with purity results for ramification, see e.g. \cite[Prop. 2.1, Cor. 2.2]{ABBP16}, imply that the two discriminant components $\Delta_1$ and $\Delta_2$ (whose degrees add up to six) must have nontrivial double covers that ramify only in singular points of $\Delta_1$ or $\Delta_2$.

Hence, natural candidates for $\Delta_1$ and $\Delta_2$ are cubic surfaces that themselves admit linear symmetric determinantal representations, hence occur as discriminants of (graded-free) conic bundles over $\P^3$. Since we want to exclude rank $0$ points, or entire $\P^2$ fibers, in these conic bundles, we have to disregard cones over plane cubic curves, see \cite[Rem. 9.3.10]{Dol12}, and to satisfy Theorem \ref{tDegConicBundles}, d), we have to exclude nonnormal cubic surfaces as well. This leaves us with the so-called del Pezzo cubic symmetroids  see \cite[\S 9.3.3]{Dol12} and \cite[Rem. 9.3.11]{Dol12}, see also \cite{Pio06} and \cite{BW79} as candidates for $\Delta_1$ and $\Delta_2$.  These are: the Cayley cubic $\Delta_{4A_1}$ with four $A_1$ singularities, a cubic $\Delta_{2A_1+A_3}$ with two 
$A_1$ and one $A_3$ singularity, and a cubic $\Delta_{A_5+A_1}$ with one $A_5$ and one $A_1$ singularity.

\begin{definition}\label{dAdmissibleCubics}
Any cubic $\Delta \subset \P^3$ projectively equivalent to one of \[ \Delta_{4A_1}, \; \Delta_{2A_1+A_3}, \; \Delta_{A_5+A_1}\] will be called a \emph{del Pezzo cubic symmetroid}.
\end{definition}

\begin{remark}\label{rLinesOnAdmissible}
By \cite[p. 448, Table 9.1]{Dol12}, the number of lines contained in the cubics $\Delta_{4A_1}, \; \Delta_{2A_1+A_3}, \; \Delta_{A_5+A_1}$ is $9, \: 5, \: 2$. 
\end{remark}

Note that del Pezzo cubic symmetroids come with natural double covers ramified only over the singular points.

Hence we arrive at the following notion.

\begin{definition}\label{dAdmissibleDegeneration2}
An \emph{admissible degeneration} of a Gushel-Mukai fourfold consists of
\begin{enumerate}
\item[(1)]
A GM vector bundle $\EE_{\sigma_0}$ on $\P^3$ sitting in a sequence
\[
\xymatrix{
0 \ar[r] & \OO_{\P^3} \ar[r]^{\sigma_0\quad\quad\quad} & \OO_{\P^3}(1) \oplus \Omega^1_{\P^3}(2) \ar[r] & \EE_{\sigma_0} \ar[r] & 0.
}
\]
\item[(2)]
A symmetric map
\[
\varphi_0 \colon \EE_{\sigma_0}^{\vee} \to \EE_{\sigma_0}
\]
yielding a conic bundle
\[
\xymatrix{
\CC_{\varphi_0, \sigma_0} \ar@{^{(}->}[r]\ar[rd] & \P (\EE_{\sigma_0}^{\vee})\ar[d]\\
  & \P^3
}
\]
satisfying the properties:
\begin{enumerate}
\item
The discriminant $\Delta$ of the above conic bundle splits as $\Delta=\Delta_1 \cup \Delta_2$ with $\Delta_1, \Delta_2$ del Pezzo cubic symmetroids. 
\item
Each of $\Delta_1$ and $\Delta_2$ is smooth along $\Gamma :=\Delta_1\cap \Delta_2$.
\item
The rank of the conics drops to $1$ only in finitely many points of $\Delta$.
\end{enumerate}
\end{enumerate}
\end{definition}
Condition (2), (c) is natural because we do not want the double covers of $\Delta_1$ and $\Delta_2$ to ramify in any curve, and all the examples treated in \cite{ABBP16} have this property. 

In the following, to ease notation, we will frequently drop subscripts in this set-up and simply write 
\[
\EE = \EE_{\sigma_0}, \quad \CC = \CC_{\varphi_0, \sigma_0}.
\]

Now, in the set-up of an admissible degeneration, call $\bar{\sigma}$ the composition
\[
\xymatrix{
\OO_{\P^3} \ar[r]^{\sigma_0\quad\quad\quad} & \OO_{\P^3}(1) \oplus \Omega^1_{\P^3}(2) \ar[r]^{\quad \mathrm{pr}} & \Omega^1_{\P^3}(2).
}
\]
Then $\bar{\sigma}$ does not vanish because if it did, it would vanish on a line in $\P^3$, hence $\sigma_0$ would vanish in at least some points. Hence we have a null-correlation bundle $\NN =\NN_{\bar{\sigma}}$ together with a symmetric map
\[
\psi : \NN^{\vee} \to \NN
\]
sitting in a diagram
\begin{gather}\label{dContactSurface}
\xymatrix{
\EE^{\vee } \ar[r]^{\varphi_0} & \EE\ar@{->>}[d] \\
\NN^{\vee} \ar@{^{(}->}[u]\ar[r]^{\psi}  &  \NN
}
\end{gather}
Note that $\EE=\EE_{\sigma_0}$ could, for example, split as $\OO_{\P^3}(1) \oplus \NN$ for some null-correlation bundle $\NN$; this happens if the image of $\sigma_0$ is contained in $\Omega^1_{\P^3}(2)$.

We will denote the degeneracy locus of $\psi$ by $K$. We need a couple of facts about the arrangement in $\P^3$ given by $K$, $\Delta_1$, $\Delta_2$. 

\begin{lemma}\label{lTypeResNull}
Consider a null-correlation bundle $\NN$ on $\P^3$ as above and a line $L \subset \P^3$. Then
\[
	\NN|_L = \OO(1)\oplus \OO(1) \quad\quad \text{or} \quad\quad \NN|_L = \OO(2) \oplus \OO.
\]
\end{lemma}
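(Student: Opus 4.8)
The plan is to restrict the defining sequence of $\NN$ to the line $L$ and read off the splitting type directly. Recall from Definition \ref{dBundles} that $\NN = \NN_{\bar{\sigma}}$ is the cokernel of a nowhere vanishing section, so we have a short exact sequence
\[
0 \to \OO_{\P^3} \xrightarrow{\ \bar{\sigma}\ } \Omega^1_{\P^3}(2) \to \NN \to 0
\]
of bundles on $\P^3$, with $\NN$ of rank $2$. First I would restrict this to $L \cong \P^1$. Since $\bar{\sigma}$ is nowhere vanishing on all of $\P^3$, its restriction $\bar{\sigma}|_L$ is a nowhere vanishing section of $\Omega^1_{\P^3}(2)|_L$, hence a subbundle inclusion $\OO_L \hookrightarrow \Omega^1_{\P^3}(2)|_L$ with locally free quotient; in particular the restricted sequence
\[
0 \to \OO_L \xrightarrow{\ \bar{\sigma}|_L\ } \Omega^1_{\P^3}(2)|_L \to \NN|_L \to 0
\]
stays exact.

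Next I would compute $\Omega^1_{\P^3}(2)|_L$. Using the conormal sequence
\[
0 \to N^{\vee}_{L/\P^3} \to \Omega^1_{\P^3}|_L \to \Omega^1_L \to 0,
\]
together with $N_{L/\P^3} \cong \OO_L(1)^{\oplus 2}$ and $\Omega^1_L \cong \OO_L(-2)$, and noting that this extension splits on $\P^1$ because $\Ext^1(\OO_L(-2), \OO_L(-1)^{\oplus 2}) = H^1(\OO_L(1)^{\oplus 2}) = 0$, one finds $\Omega^1_{\P^3}|_L \cong \OO_L(-2)\oplus\OO_L(-1)^{\oplus 2}$ and therefore
\[
\Omega^1_{\P^3}(2)|_L \cong \OO_L \oplus \OO_L(1)^{\oplus 2}.
\]

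The final step is a short case analysis on the nowhere vanishing section $\bar{\sigma}|_L = (s_0, s_1, s_2)$, where $s_0 \in H^0(\OO_L)$ and $s_1, s_2 \in H^0(\OO_L(1))$. If $s_0 \neq 0$ it is a nonvanishing constant, so projection to the first summand retracts the inclusion and splits off $\OO_L$; the quotient is then the complementary $\OO_L(1)^{\oplus 2}$, giving $\NN|_L = \OO(1)\oplus\OO(1)$. If $s_0 = 0$, the first summand $\OO_L$ maps isomorphically onto a summand of $\NN|_L$, while $(s_1,s_2)$ is a nowhere vanishing section of $\OO_L(1)^{\oplus 2}$ (nowhere vanishing precisely because $\bar{\sigma}|_L$ is) defining a subbundle $\OO_L \hookrightarrow \OO_L(1)^{\oplus 2}$ whose quotient is a line bundle of degree $2-0 = 2$, i.e.\ $\OO_L(2)$; hence $\NN|_L = \OO(2)\oplus\OO$. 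These are exactly the two cases in the statement.

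I expect the only real subtlety to be the bookkeeping of exactness of the restricted sequence, which hinges on $\bar{\sigma}$ being nowhere vanishing on all of $\P^3$ rather than only generically: this is what guarantees $\bar{\sigma}|_L$ is a genuine subbundle inclusion and hence that $\NN|_L$ is locally free with the stated numerics. As a sanity check, a quicker but less explicit route is to observe that $\Omega^1_{\P^3}(2)$ is globally generated (the sections $x_i\,dx_j - x_j\,dx_i$ spanning $\Lambda^2 W^{\vee} \cong H^0(\Omega^1_{\P^3}(2))$ generate every fiber), hence so are its quotient $\NN$ and the restriction $\NN|_L$; since $c_1(\NN) = 2H$, a globally generated rank-$2$ bundle on $\P^1$ of degree $c_1(\NN)\cdot L = 2$ can only be $\OO(1)\oplus\OO(1)$ or $\OO(2)\oplus\OO$.
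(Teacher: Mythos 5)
Your proof is correct and follows essentially the same route as the paper's: restrict the defining sequence of $\NN$ to $L$, identify $\Omega^1_{\P^3}(2)|_L \cong \OO_L \oplus \OO_L(1)^{\oplus 2}$ (the paper gets this by an explicit kernel computation from the restricted Euler sequence rather than via the conormal sequence, but the outcome is identical), and run the same dichotomy on whether the $\OO_L$-component of the restricted section vanishes. Your closing observation via global generation of $\NN$ and $c_1(\NN)\cdot L = 2$ would be a genuinely shorter alternative argument, but since you present it only as a sanity check, the main line of reasoning coincides with the paper's.
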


\begin{proof}
First we consider the Euler sequence on $\P^3$: 
\[
	0 \to \Omega(2) \to 4 \OO(1) \xrightarrow{(x_0,x_1,x_2,x_3)} \OO(2) \to 0.
\]
This sequence remains exact when restricted to $L$. After a change of coordinates we may assume
that $L = \{x_2=x_3=0\}$. We obtain
\[
	0 \to \Omega(2)|_L \to 4 \OO_L(1) \xrightarrow{(x_0,x_1,0,0)} \OO_L(2) \to 0.
\]
Now $\Omega(2)|_L$ is the kernel of the map represented by $(x_0,x_1,0,0)$, the kernel of which is
easily calculated:
\[
	0 \to \OO_L \oplus 2\OO_L(1) \xrightarrow{
	\left(
	\begin{smallmatrix} 
		x_1 & 0 & 0 \\
		-x_0& 0 & 0 \\
		0 & 1 & 0 \\
		0 & 0 & 1
        \end{smallmatrix}
        \right)
        }
	4 \OO_L(1) \xrightarrow{(x_0,x_1,0,0)} \OO_L(2) \to 0.
\]
Now $\NN$ is the cokernel of a nowhere vanishing section of $\Omega(2)$:
\[
	0 \to \OO \to \Omega(2) \to \NN \to 0.
\]
Restricting this sequence to $L$ we obtain
\[
	0 \to \OO_L \xrightarrow{\tau} \OO_L \oplus 2\OO(1) \to \NN|_L \to 0,
\]
with $\tau$ a non vanishing section. After coordinate changes there are only two possibilities
for $\tau$
\begin{enumerate}
\item $\tau = (1,0,0)$. In this case $\NN|_L = 2\OO_L(1)$
\item $\tau = (0,x_0,x_1)$. In this case $\NN|_L = \OO_L \oplus \OO_L(2)$
\end{enumerate}
\end{proof}

\begin{proposition}\label{pDegIsQuartic}
Let $\psi\colon \NN^{\vee} \to \NN$ be the symmetric map as above, derived from an admissible degeneration of Gushel-Mukai fourfolds as in Definition \ref{dAdmissibleDegeneration2}. Then the determinant of this map is a hypersurface of degree $4$. In other words, $K$ is a hypersurface of degree $4$.
\end{proposition}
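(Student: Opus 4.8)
The plan is to determine, by a Chern-class computation, the line bundle of which $\det \psi$ is a section, and then to verify that this section does not vanish identically, so that its zero locus $K$ is a genuine hypersurface of the predicted degree $4$.

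First I would pin down $\det \NN$. Since $\NN = \NN_{\bar\sigma}$ is the cokernel of the nowhere vanishing section $\bar\sigma\colon \OO_{\P^3}\to \Omega^1_{\P^3}(2)$, it is a rank $2$ bundle fitting in
\[
0 \to \OO_{\P^3} \xrightarrow{\bar\sigma} \Omega^1_{\P^3}(2) \to \NN \to 0,
\]
so $c_1(\NN) = c_1(\Omega^1_{\P^3}(2))$. Using the twisted Euler sequence already recorded in the proof of Lemma \ref{lTypeResNull}, namely $0 \to \Omega^1_{\P^3}(2) \to \OO_{\P^3}(1)^{\oplus 4} \to \OO_{\P^3}(2) \to 0$, one gets $c_1(\Omega^1_{\P^3}(2)) = 4 - 2 = 2$, hence $\det \NN = \OO_{\P^3}(2)$. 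Now $\psi\colon \NN^{\vee} \to \NN$ is a map of rank $2$ bundles, so its determinant is a section
\[
\det \psi \in \Hom(\det \NN^{\vee}, \det \NN) = H^0\big(\P^3, (\det \NN)^{\otimes 2}\big) = H^0\big(\P^3, \OO_{\P^3}(4)\big).
\]
Consequently, as soon as $\det \psi$ is not identically zero, its vanishing locus $K$ is a quartic surface, which is exactly the assertion.

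It remains to show $\det \psi \not\equiv 0$, and here I would exploit the origin of $\psi$ in diagram \eqref{dContactSurface}. Comparing the defining sequences of $\EE$ and $\NN$ via the snake lemma produces a natural extension $0 \to \OO_{\P^3}(1) \to \EE \xrightarrow{\pi} \NN \to 0$; dualizing it identifies the inclusion $\NN^{\vee} \hookrightarrow \EE^{\vee}$ with $\pi^{\vee}$, so that $\psi$ is literally the restriction $\pi\,\varphi_0\,\pi^{\vee}$ of the quadratic form $\varphi_0$ to the subbundle $\NN^{\vee} \subset \EE^{\vee}$. Geometrically, $\P(\NN^{\vee}) \subset \P(\EE^{\vee})$ is a $\P^1$-subbundle, and $D := \CC \cap \P(\NN^{\vee})$ is the fibrewise intersection of the conics $\CC_p$ with the lines $\P(\NN_p^{\vee})$; thus $D \to \P^3$ is a double cover whose branch locus is precisely $K = \{\det \psi = 0\}$. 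The identity $\det \psi \equiv 0$ would force this double cover to be everywhere non-reduced, i.e. the line $\P(\NN_p^{\vee})$ would be tangent to the conic $\CC_p$ for every $p \in \P^3$.

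The main obstacle is exactly ruling out this universal tangency. For a true conic bundle in the sense of Proposition \ref{pGMasConicBundle} and Definition \ref{dAdmissibleDegeneration2}, the fibre $\CC_p$ is a smooth conic for general $p$, so it would have to be tangent to the fixed line $\P(\NN_p^{\vee})$ for \emph{all} such $p$. I would eliminate this by producing a single base point $p$ at which $\P(\NN_p^{\vee})$ is a transverse secant of the smooth conic $\CC_p$, whence $\det \psi(p) \neq 0$; since transversality is an open condition it suffices to exhibit one such $p$. Equivalently, one may restrict to a general line $L \subset \P^3$ on which $\NN|_L = \OO_L(1)^{\oplus 2}$ by Lemma \ref{lTypeResNull}, write $\psi|_L$ as a symmetric $2 \times 2$ matrix of quadratic forms on $L$, and check that its determinant is a nonzero section of $\OO_L(4)$. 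Either way, once $\det \psi \not\equiv 0$ is established the Chern-class computation concludes the proof.
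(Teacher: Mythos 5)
Your reduction is correct as far as it goes: from the defining sequence of $\NN$ and the twisted Euler sequence one gets $\det\NN=\OO_{\P^3}(2)$, so $\det\psi$ is a section of $\OO_{\P^3}(4)$ (the paper obtains the same degree count from Lemma \ref{lTypeResNull} by restricting to a general line), and the entire content of the proposition is therefore the non-vanishing $\det\psi\not\equiv 0$. You also correctly translate what $\det\psi\equiv 0$ would mean: the line $\P(\NN_p^{\vee})\subset\P(\EE_p^{\vee})$ would be tangent to, or contained in, the conic $\CC_p$ for every $p$. But at exactly this point the proposal stops being a proof. ``I would eliminate this by producing a single base point $p$ at which $\P(\NN_p^{\vee})$ is a transverse secant'' is not an argument: exhibiting such a $p$, or showing that one exists, \emph{is} the assertion to be proved, and nothing in your write-up explains why the everywhere-tangent configuration cannot occur. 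It cannot be excluded by openness or genericity alone, since $\varphi_0$ is a fixed and deliberately special (degenerate) symmetric map; one must actually invoke the hypotheses of Definition \ref{dAdmissibleDegeneration2}. The same remark applies to your alternative phrasing ``check that its determinant is a nonzero section of $\OO_L(4)$'' --- the check is the theorem.

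This missing step is precisely where the paper's proof does its work. Assuming $\psi$ has rank at most $1$ everywhere, it puts $\varphi_0$ into a local analytic normal form near a point where $\psi$ has rank exactly $1$ and reads off $\det\varphi_0=-\gamma_2^2$ there; if $\gamma_2$ vanished, the discriminant would be non-reduced (or would contain an open set), contradicting the admissibility requirement that $\Delta$ be the union of two distinct reduced cubic symmetroids. Hence the sextic $\Delta$ would have to lie entirely inside the rank-$0$ locus of $\psi$, and a degree count on a general line via Lemma \ref{lTypeResNull} (the entries of $\psi|_L$ cut out at most $4$ points, whereas a reduced sextic meets a general line in $6$) gives the contradiction. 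To complete your proof along your own lines you need an argument of this type: some explicit use of the reducedness of the sextic discriminant, or of another admissibility condition, to rule out universal tangency of $\P(\NN^{\vee})$ to the conic bundle.
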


\begin{proof}
We have to show that the map $\psi$ does not have rank $1$ or less on all of $\P^3$. Once we have accomplished this, we know that $K$ is a (possibly reducible or non-reduced) surface and Lemma \ref{lTypeResNull} implies that a general line $L$ intersects $K$ in four points (counted with multiplicity).

Now suppose $\psi$ dropped rank on all of $\P^3$. If the rank of $\psi$ is $1$ at a point $P$ in $\P^3$, and equal to $1$ in an entire neighborhood of $P$, then we can reduce $\varphi_0$ (and the submatrix defining $\psi$) to the following normal form locally analytically around $P$:
\[
\begin{pmatrix}
\gamma_1 & 0 & \gamma_2 \\
0 &  1 & 0\\
\gamma_2 & 0 & 0
\end{pmatrix}.
\]
Here $\gamma_i$ are holomorphic functions in a neighborhood of $P$. If $\gamma_2$ vanished at $P$, then the the discriminant of $\varphi_0$ would not be a reduced sextic, contrary to our assumption for an admissible degeneration. Hence, $\gamma_2$, hence $\det \varphi_0$, does not vanish at any point $P$ where $\psi$ has rank $1$, in other words, the sextic discriminant of $\varphi_0$ must entirely be contained in the locus of points where $\psi$ has rank $0$. However, this is absurd, because then Lemma \ref{lTypeResNull} would imply that a general line $L$ would intersect the (reduced) sextic in at most $4$ points, a contradiction. 
\end{proof}

Following \cite[Def. 1.7]{Cat81} we introduce the following piece of terminology.

\begin{definition}\label{dContact}
We say that two closed subvarieties $X_1, X_2 \subset \P^N$ have contact of order $m\ge 1$ if for every component $W$ of $X_1\cap X_2$ the intersection multiplicity of $X_1, X_2$ at $W$ is $\ge m+1$ and there is a component of $X_1 \cap X_2$ along which this intersection multiplicity is exactly equal to $m+1$. We say for short that $X_1, X_2$ have \emph{even contact} if $X_1$ and $X_2$ are contact for some $m$ and the intersection multiplicity of $X_1, X_2$ along any component of $X_1\cap X_2$ is even. 
\end{definition}

\begin{lemma}\label{lContactSeparately}
Suppose given an admissible degeneration. Then:
\begin{enumerate}
\item
Suppose that $K$ is a normal (reduced and irreducible) surface. Then 
$K$ and $\Delta$ have even contact. Indeed, $K$ and $\Delta_1$ have even contact, and $K$ and $\Delta_2$ have even contact, too. In other words, $\Delta$ cuts out a Weil (even Cartier) divisor on $K$, which we can write as $2C$ for some Weil divisor $C$ on $K$; and $\Delta$ similarly cuts out divisors of the form $2C_1$ on $\Delta_1$ and $2C_2$ on $\Delta_2$. We endow the Weil divisors $C, C_1, C_2$ with scheme structures as follows: outside of the nodes of $K$, these divisors are Cartier, hence carry natural scheme structures via the local equations defining the divisors. We define scheme-structures on $C, C_1, C_2$ simply by taking the respective closures of these schemes defined on the complement of the nodes.
\item
As a scheme, $C$ is equal to the degeneracy locus of the natural map
\[
\NN^{\vee} \to \EE ,
\]
obtained by composing the inclusion $\NN^{\vee} \to \EE^{\vee}$ with $\varphi_0$. 
\end{enumerate}
\end{lemma}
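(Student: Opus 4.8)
The plan is to trivialize everything locally and reduce the whole statement to a Schur-complement computation for a symmetric $3\times3$ matrix, the key point being that the adjugate of a rank-one $2\times2$ symmetric form is a perfect square.

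First I would make the diagram (\ref{dContactSurface}) explicit. Projecting $\OO_{\P^3}(1)\oplus\Omega^1_{\P^3}(2)$ onto its second factor carries $\sigma_0$ to $\bar\sigma$, hence induces a surjection $\EE\twoheadrightarrow\NN$, and the snake lemma applied to the two defining sequences identifies its kernel with $\OO_{\P^3}(1)$. This gives
\[
0\to\OO_{\P^3}(1)\to\EE\xrightarrow{\pi}\NN\to 0,\qquad 0\to\NN^{\vee}\xrightarrow{\pi^{\vee}}\EE^{\vee}\to\OO_{\P^3}(-1)\to 0,
\]
where $\pi^{\vee}$ is the inclusion of (\ref{dContactSurface}) and $\psi=\pi\circ\varphi_0\circ\pi^{\vee}$. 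In a local frame splitting these sequences, $\EE^{\vee}=\NN^{\vee}\oplus\OO(-1)$ and $\varphi_0$ becomes a symmetric block matrix $\left(\begin{smallmatrix}\psi & b\\ b^{T} & c\end{smallmatrix}\right)$ whose corner $2\times2$ block is exactly $\psi$.

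Second, the Schur-complement identity $\det\varphi_0=c\det\psi-b^{T}\mathrm{adj}(\psi)\,b$ reads globally as $f_1f_2=cg-q$, where $f_1,f_2$ cut out $\Delta_1,\Delta_2$, where $g=\det\psi$ cuts out $K$, and $q=b^{T}\mathrm{adj}(\psi)\,b$. On $K$ the block $\psi$ has rank $\le 1$; since $K$ is normal, its singular locus, which contains the rank-$0$ locus $\{\psi=0\}$, is finite, so on the dense open $K^{\circ}=K\setminus\mathrm{Sing}(K)$ the form $\psi$ has rank exactly $1$ and $\mathrm{adj}(\psi)$ is a rank-$1$ symmetric form. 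Writing $\mathrm{adj}(\psi)=ww^{T}$ locally, the quadratic form $q|_{K}=(w\cdot b)^{2}=\ell^{2}$ is a perfect square, with $\ell=w\cdot b$. Hence $f_1f_2|_{K}=-\ell^{2}$, so $K\cap\Delta=\mathrm{div}_{K}(f_1f_2)=2\,\mathrm{div}_{K}(\ell)=2C$, which gives even contact of $K$ and $\Delta$ and defines $C$ as the closure of $\{\ell=0\}$ on $K^{\circ}$.

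Third, for the separate statements I would compare orders along each prime divisor $Z$ of $C$ in the identity $\mathrm{div}_{K}(f_1)+\mathrm{div}_{K}(f_2)=2C$. If $Z\not\subseteq\Gamma=\Delta_1\cap\Delta_2$, then $Z$ lies in at most one $\Delta_i$, so the other factor has order $0$ along $Z$ and the relevant $f_i$ has order $2\,\mathrm{ord}_Z(\ell)$, which is even; this yields $K\cap\Delta_i=2C_i$ with $C=C_1+C_2$. The remaining, and main, obstacle is to exclude prime divisors of $C$ contained in $\Gamma$, i.e. to show $\dim(\Gamma\cap K)=0$. I expect this to be the hard part: a purely local rank computation does not suffice, since over a generic point of a hypothetical curve $Z\subseteq\Gamma\cap K$ the conic still has rank $2$ (the normal form above degenerates so that the middle row and column vanish, yet the matrix can stay of rank two). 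I would therefore combine the normality of $K$ with the admissibility hypotheses of Definition \ref{dAdmissibleDegeneration2} — the smoothness of $\Delta_1,\Delta_2$ along $\Gamma$ and the finiteness of the rank-$1$ locus — to argue that $Z\subseteq\Gamma\cap K$ would force the vertices of the conics to sweep out the distinguished section $\P(\ker\psi)$ along an entire curve, contradicting the generic behaviour of the conic bundle along $\Gamma$.

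Finally, for (b) I would identify $C$ with the degeneracy locus of $\NN^{\vee}\xrightarrow{\varphi_0\circ\pi^{\vee}}\EE$, which in the local frame is $v\mapsto(\psi v,\,b\cdot v)$, i.e. the $3\times2$ matrix $\left(\begin{smallmatrix}\psi\\ b^{T}\end{smallmatrix}\right)$. Its $2\times2$ minors are $g=\det\psi$ together with the two mixed minors, which on $K^{\circ}$ equal $w_2\ell$ and $-w_1\ell$; since $w\neq0$ there, on $K^{\circ}$ the rank-$\le1$ locus is exactly $\{g=0,\ \ell=0\}=C$. This degeneracy locus lies in $K$ and has the expected codimension $2$ in $\P^3$, hence is Cohen--Macaulay by Eagon--Northcott and carries no embedded components, so it agrees with the closure $C$ of part (a) on the dense open $K^{\circ}$ and therefore coincides with it as a scheme.
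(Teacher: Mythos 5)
Your reduction to a Schur complement is sound and is really the same computation as the paper's, organized more invariantly. The paper also splits off $\NN$ from $\EE$ locally, but it first diagonalizes $\varphi_0$ at a \emph{general} point $P$ of each component of $K\cap\Delta$ (using condition (2)(c) of Definition \ref{dAdmissibleDegeneration2} to know the conic has rank $2$ there, so that $M=\mathrm{diag}(1,1,\delta)$ with $\delta$ a local equation of $\Delta$) and then normalizes the $2\times 3$ projection matrix $N$, arriving at $\det(NMN^{t})=(f^{2}+1)\delta+g^{2}$; your identity $f_1f_2=c\,g-b^{T}\mathrm{adj}(\psi)b$ together with $\mathrm{adj}(\psi)|_{K^{\circ}}=ww^{T}$ is exactly the same perfect-square phenomenon, and it avoids the paper's three-case normal form for $N$. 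Your treatment of (b) — the minors of $\bigl(\begin{smallmatrix}\psi\\ b^{T}\end{smallmatrix}\bigr)$ generate $(g,\ell)$ on $K^{\circ}$, plus Cohen--Macaulayness/purity of the codimension-$2$ degeneracy locus — also matches the paper's argument. One small caution: your claim that the rank of the conics is $2$ generically on each contact component should be sourced from condition (2)(c) (finiteness of the rank-$1$ locus) rather than from normality of $K$ alone; that hypothesis is what both you and the paper are actually using to get the rank-$2$ normal form at a general point.

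The genuine gap is the one you flag yourself: the \emph{separate} even-contact statements for $\Delta_1$ and $\Delta_2$. Your order-comparison along a prime divisor $Z$ of $C$ with $Z\not\subseteq\Gamma$ is fine, but the lemma asserts even contact of $K$ with each $\Delta_i$ unconditionally, and you neither prove that no component of $K\cap\Delta$ lies inside $\Gamma=\Delta_1\cap\Delta_2$ nor handle the parity of $\mathrm{ord}_Z(f_1)$ and $\mathrm{ord}_Z(f_2)$ individually when $Z\subseteq\Gamma$ (the identity only gives that their \emph{sum} is $2\,\mathrm{ord}_Z(\ell)$). The sketch you offer — vertices of the conics sweeping out $\P(\ker\psi)$ along a curve contradicting ``generic behaviour'' — is not an argument: nothing in Definition \ref{dAdmissibleDegeneration2} directly forbids that configuration, and you would at least need to bring in the smoothness of $\Delta_1,\Delta_2$ along $\Gamma$ in a quantitative way. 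The paper itself disposes of this point very quickly (it substitutes $\delta=st$ into the identity $(f^{2}+1)\delta+g^{2}$ and reads off even contact with each factor), so to be complete your write-up needs either an argument that $\dim(\Gamma\cap K)=0$ in an admissible degeneration, or a direct parity argument for each of $s$ and $t$ along components of $K\cap\Gamma$; as it stands, part of assertion (a) is unproved.
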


\begin{proof}
Let us prove a): clearly, $K \cap \Delta$ is some effective Weil divisor on $K$, and we want to show that it can be written as $2C$ for some other effective Weil divisor $C$. Suppose that $C'$ is an irreducible component of the intersection $K \cap \Delta$. By condition (2), (c) of Definition \ref{dAdmissibleDegeneration2}, the rank of the conics is $2$ in a general point $P$ of $C'$. Now consider diagram (\ref{dContactSurface}) at $P$: 
\begin{gather}\label{dContactSurface2}
\xymatrix{
(\EE^{\mathrm{an}})^{\vee }_{P} \ar[r]^{\varphi_{0,P}} & \EE^{\mathrm{an}}_{P} \ar@{->>}[d] \\
(\NN^{\mathrm{an}}_P)^{\vee} \ar@{^{(}->}[u]\ar[r]^{\psi_P}  &  \NN^{\mathrm{an}}_P 
}
\end{gather}
where $\EE^{\mathrm{an}}_{P}$ denotes the stalk of the analytification of $\EE$ at $P$, so is just isomorphic to $\HH_P^3$, with $\HH_P$ the stalk of the sheaf of holomorphic functions at $P$, and the other entries in the diagram are then self-explanatory. 
We can choose a basis in $\EE^{\mathrm{an}}_{P}$ and corresponding dual basis in $(\EE^{\mathrm{an}})^{\vee }_{P}$ such that $\varphi_{0,P}$ is given by a $3 \times 3$ matrix
\[
	M = \begin{pmatrix} 1 & 0 & 0 \\ 0 & 1 & 0 \\ 0 & 0 & \delta \end{pmatrix}
\]
where $\delta$ is a local equation for $\Delta$ at $P$. If $P$ is a point where $\Delta_1$ and $\Delta_2$ meet, we can write $\delta = st$ with $s$ and $t$ local equations for $\Delta_1$ and $\Delta_2$ at $P$. 

The surface $K$ is defined by the determinant of a $2 \times 2$ matrix
\[
	NMN^t
\]
with $N$ a $2 \times 3$ matrix, representing the map $ \EE^{\mathrm{an}}_{P}\to  \NN^{\mathrm{an}}_P$, which has full rank $2$ at $P$. After appropriate row transformations, respectively, choosing an appropriate basis in $\NN^{\mathrm{an}}_P$, we
can assume
\begin{align*}
	N &= \begin{pmatrix} 1 & 0 & f \\ 0 & 1 & g \end{pmatrix}\\
\mathrm{or}\; N&= \begin{pmatrix} 1 & f & 0 \\ 0 & g & 1 \end{pmatrix}\\
\mathrm{or}\; N &= \begin{pmatrix} f & 1 & 0 \\ g & 0 & 1 \end{pmatrix} 	
\end{align*}
with $f, g$ local holomorphic functions at $P$.

In the first case we have
\begin{align*}
	NMN^t &= \begin{pmatrix} 1 & 0 & f \\ 0 & 1 & g \end{pmatrix}  
	\begin{pmatrix} 1 & 0 & 0 \\ 0 & 1 & 0 \\ 0 & 0 & \delta \end{pmatrix}
	\begin{pmatrix} 1 & 0 \\ 0 & 1 \\ f & g \end{pmatrix} \\
	&= \begin{pmatrix} 
		1 & 0 & \delta f \\
		0 & 1 & \delta g
	      \end{pmatrix}
	\begin{pmatrix} 1 & 0 \\ 0 & 1 \\ f & g \end{pmatrix} \\	   
	&= \begin{pmatrix}
	    1+\delta f^2 & \delta fg \\
	    \delta fg & 1 + \delta g^2
	   \end{pmatrix}    	
\end{align*}
In particular the determinant is $1$ on $\delta =0$. This means that in this situation $K$ does not pass through $P$. 

The second and third cases lead to similar calculations and the same conclusion, so we only set down the details in the third case. We have
\begin{align*}
	NMN^t &= \begin{pmatrix} f & 1 & 0  \\ g & 0 & 1 \end{pmatrix}  
	\begin{pmatrix} 1 & 0 & 0 \\ 0 & 1 & 0 \\ 0 & 0 & \delta \end{pmatrix}
	\begin{pmatrix} f & g  \\ 1 & 0 \\ 0 & 1  \end{pmatrix} \\
	&= 
	\begin{pmatrix}
	f & 1 & 0 \\
	g & 0 & \delta
	\end{pmatrix}
	\begin{pmatrix} f & g  \\ 1 & 0 \\ 0 & 1  \end{pmatrix} \\
	&=
	\begin{pmatrix}
	f^2+1 & fg \\ 
	fg & g^2 +\delta
	\end{pmatrix}
\end{align*}
In particular, the surface $K$ is defined by 
\[
	\det(NMN^t)= f^2g^2+f^2\delta +g^2+\delta -f^2g^2 = (f^2+1)\delta + g^2
\]
and it either does not pass through $P$ or $\Delta$ cuts out twice some effective Weil divisor on $K$ locally near $P$. Moreover, if $\delta = st$, we see that $K$ has even contact with each of $\Delta_1$ and $\Delta_2$ separately. This proves the assertions in a).

\medskip

Finally, let us prove b): for this it suffices to remark that, using the above local calculation, in the third case above, the map $\NN^{\vee} \to \EE$ is locally around $P$ represented by the matrix
\[
\begin{pmatrix} f & g  \\ 1 & 0 \\ 0 & \delta  \end{pmatrix}
\]
whence, locally around $P$, the degeneracy scheme is given by $\delta= g=0$ whereas $2C$ is defined by $\delta= g^2=0$. Hence, the degeneracy scheme, being Cohen-Macaulay and thus pure-dimensional, defines $C$ scheme-theoretically. 
\end{proof}

\begin{definition}\label{dKummerETC}
A \emph{Kummer surface} in $\P^3$ is an irreducible quartic surface in $\P^3$ with $16$ nodes and no other singularities. A \emph{null-correlation quartic} in $\P^3$ is any degeneracy scheme of a symmetric map
\[
\psi \colon \NN_{\bar{\sigma}}^{\vee} \to \NN_{\bar{\sigma}}
\]
of a null-correlation bundle $\NN_{\bar{\sigma}}$. 
\end{definition}

\begin{remark}\label{rKummer}
Standard references on Kummer surfaces in $\P^3$ are \cite[Chapter10]{Dol12}, \cite{G-D94/1}, \cite{G-D94/2}, \cite{Hu05}. Blowing up the nodes, one obtains a smooth Kummer K3 surface. Null-correlation quartics are special degenerations of Kummer surfaces. One can show that every Kummer surface can be written as a null-correlation quartic in six different ways, a result we plan to elaborate on in a broader context in a future article. 
\end{remark}

\begin{remark}\label{rNoLines}
In \cite[\S 4]{G-D94/2} it is proven that a Kummer quartic in $\P^3$ contains no lines. 
\end{remark}

\begin{definition}\label{dNullCorrelationQuartic}
In the set-up of an admissible degeneration, we will call the degeneracy locus $K$ of $\psi$ the \emph{associated null-correlation quartic}.
\end{definition}

\begin{definition}\label{dTameDegeneration2}
If in the set-up of an admissible degeneration, $K$ is a Kummer surface, then we call the degeneration a \emph{tame degeneration}.  
\end{definition}

We first assume only that we are dealing with admissible degenerations of GM fourfolds. So we do not assume that $K$ is Kummer for the time being.

\begin{lemma}\label{lContactK}
Fix an admissible degeneration. Let $C$ be the degeneracy locus of the map
\[
\NN^{\vee} \to \EE ,
\]
i.e. the contact curve. 
Then $C$ is a curve of degree $12$ and arithmetic genus $p_a(C)=15$.

%\medskip

%{\color{red} maybe we do not use this extra assertion anyway and could omit it}
%Conversely, given an arrangement of two admissible cubics $X_1$ and $X_2$ in $\P^3$ both of which are contact to a reduced null-correlation quartic surface $K$ along a curve $C=C_1\cup C_2$ that can be expressed as a degeneracy locus of a map of the above type, then there exists a tame degeneration of GM fourfolds having $X=X_1\cup X_2$ as discriminant.
\end{lemma}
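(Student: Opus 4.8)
The plan is to identify $C$ as a determinantal curve, compute its degree by the Thom--Porteous formula, and compute its arithmetic genus from the Eagon--Northcott resolution of $\OO_C$. Recall that $\EE$ has rank $3$, being the cokernel of a nowhere-vanishing section of the rank-$4$ bundle $\OO_{\P^3}(1)\oplus\Omega^1_{\P^3}(2)$, while $\NN$ has rank $2$. Thus the map $\alpha\colon \NN^\vee \to \EE$ goes from a rank-$2$ to a rank-$3$ bundle, and $C$ is the locus where its rank drops to $\le 1$, i.e. the common zero locus of the $2\times 2$ minors. This has expected codimension $(3-1)(2-1)=2$; since Lemma~\ref{lContactSeparately} already tells us $C$ is a curve, the degeneracy locus attains exactly this codimension, which is the hypothesis making the determinantal machinery applicable.

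First I would record the relevant Chern classes, writing $h=c_1(\OO_{\P^3}(1))$. The Euler sequence gives $c(\Omega^1_{\P^3}(2)) = 1+2h+2h^2$, so from $0\to\OO_{\P^3}\to\Omega^1_{\P^3}(2)\to\NN\to 0$ we get $c(\NN)=1+2h+2h^2$ and $\det\NN=\OO_{\P^3}(2)$; likewise $0\to\OO_{\P^3}\to\OO_{\P^3}(1)\oplus\Omega^1_{\P^3}(2)\to\EE\to 0$ gives $c(\EE)=(1+h)(1+2h+2h^2)=1+3h+4h^2+2h^3$ and $\det\EE=\OO_{\P^3}(3)$. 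The Thom--Porteous formula then expresses the class of the rank-$\le 1$ locus of $\alpha$ as the single Chern class $[C]=c_2(\EE-\NN^\vee)$. Because $\NN$ has rank $2$ we have $c(\NN^\vee)^{-1}=c(\NN)$, so $c(\EE-\NN^\vee)=c(\EE)\,c(\NN)$, and reading off the coefficient of $h^2$ gives $\deg C = 12$.

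For the arithmetic genus I would invoke the Eagon--Northcott complex of $\alpha$ (equivalently of its transpose $\EE^\vee\to\NN$), which, $C$ having the expected codimension $2$, is a locally free resolution of $\OO_C$. Its two nontrivial terms are $\Lambda^2\EE^\vee\otimes(\det\NN)^{-1}$ and $\det\EE^\vee\otimes\NN^\vee\otimes(\det\NN)^{-1}$; using $\Lambda^2\EE^\vee\cong\EE\otimes(\det\EE)^{-1}$ and $\NN^\vee\cong\NN\otimes(\det\NN)^{-1}$, the resolution becomes
\[
0 \longrightarrow \NN(-7) \longrightarrow \EE(-5) \longrightarrow \OO_{\P^3} \longrightarrow \OO_C \longrightarrow 0 .
\]
Taking Euler characteristics yields $\chi(\OO_C)=1-\chi(\EE(-5))+\chi(\NN(-7))$, and a Hirzebruch--Riemann--Roch computation on $\P^3$ (with $\mathrm{td}(\P^3)=1+2h+\frac{11}{6}h^2+h^3$ and the Chern classes above) gives $\chi(\EE(-5))=-1$ and $\chi(\NN(-7))=-16$, hence $\chi(\OO_C)=-14$ and $p_a(C)=1-\chi(\OO_C)=15$. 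As a built-in check, the same resolution yields the Hilbert polynomial $\chi(\OO_C(n))=12n-14$, consistent with $\deg C=12$.

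The step I expect to be the main obstacle is not the arithmetic but the verification that the Eagon--Northcott complex is genuinely exact, i.e. that the scheme-theoretic degeneracy locus of $\alpha$---with the scheme structure fixed in Lemma~\ref{lContactSeparately} b)---has pure codimension $2$ with no embedded or lower-dimensional excess that would violate the grade hypothesis. Here I would rely on Lemma~\ref{lContactSeparately}: $C$ is (the closure of) a Weil divisor on the surface $K$, hence genuinely one-dimensional, so the ideal of maximal minors has grade equal to its codimension $2$, $\OO_C$ is Cohen--Macaulay, and the complex resolves it.
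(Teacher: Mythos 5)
Your proof is correct and is essentially the paper's own argument in different packaging: the Eagon--Northcott resolution $0\to\NN(-7)\to\EE(-5)\to\OO_{\P^3}\to\OO_C\to 0$ is precisely the paper's exact sequence $0\to\NN^{\vee}\to\EE\to\II_C(5)\to 0$ (note $\NN^{\vee}\cong\NN(-2)$) spliced with $0\to\II_C\to\OO_{\P^3}\to\OO_C\to 0$, and both proofs then perform the same Hirzebruch--Riemann--Roch computation, with Thom--Porteous amounting to reading off $c_2$ of that cokernel; both also leave the codimension-two/expected-rank hypothesis at the same implicit level. One cosmetic slip: the identity $c(\NN^{\vee})^{-1}=c(\NN)$ holds not because $\NN$ has rank $2$ but because $c_1(\NN)^2=2c_2(\NN)=4h^2$ for the null-correlation bundle (and higher-degree terms vanish on $\P^3$), though all the numbers you obtain are correct.
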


\begin{proof}
Consider the exact sequence  
\begin{gather}\label{fIdealSeq}
\xymatrix{
0 \ar[r] & \NN^{\vee} \ar[r] & \EE \ar[r] & \II_C (a) \ar[r] & 0
}
\end{gather}
where $\II_C$ is the ideal sheaf of $C$, and $a$ is some twist. We will compute $a$ and the arithmetic genus $\chi (\II_C)$ via a Chern class computation and Hirzebruch-Riemann-Roch. See \cite[Appendix A]{Ha77} for the relevant background. Here we just recall the following for a vector bundle $\VV$ of rank $r$ on $\P^3$ and a codimension $z$ subscheme $Z$ of $\P^3$ and vector bundle $\WW_Z$ of rank $n$ on $Z$:
\begin{align}
\label{fDual}
c_r (\VV ) &= (-1)^r c_r (\VV ) \\ \label{fZ}
c_i (\WW_Z) &=0 \quad i< z, \quad c_z (\WW_Z) = (-1)^{z-1} (z-1)! n[Z]\\
\label{fChernChar}
\mathrm{ch}(\VV ) &= r + c_1 (\VV ) + \frac{1}{2} \left( c_1(\VV )^2 - 2c_2 (\VV )\right) + \frac{1}{6} \left( c_1(\VV )^3 - 3 c_1(\VV )c_2 (\VV ) + 3c_3 (\VV )\right)\\
\label{fToddClass}
\mathrm{td} (\VV )&= 1 + \frac{1}{2} c_1 (\VV ) + \frac{1}{12}\left( c_1 (\VV )^2 + c_2 (\VV )\right) + \frac{1}{24} c_1 (\VV ) c_2 (\VV )\\
\label{fHRR}
\chi (\VV ) &= \deg \left( \mathrm{ch} (\VV ) . \mathrm{td}(\TT_{\P^3})\right)_3
\end{align}
where $\mathrm{ch}$ is the Chern character, $\mathrm{td}$ is the Todd class, $\chi$ the Euler characteristic, and all computations are in the Chow ring of $\P^3$ with rational coefficients. Note that as usual, from the Chern character one obtains a homomorphism 
\[
\mathrm{ch} \colon K_0 (\P^3) \to A^* (\P^3)_{\Q}
\]
which is a homomorphism of rings. Note that $A^* (\P^3)_{\Q}$ is a truncated polynomial ring in the class $h$ of a hyperplane. 
In particular, $\mathrm{ch}$ is additive on short exact sequences. Now the Euler exact sequence
\begin{gather}\label{fEulerSeq}
\xymatrix{
0 \ar[r] & \OO \ar[r] & 4\OO (1) \ar[r] & \TT \ar[r] & 0
}
\end{gather}
gives 
\[
\mathrm{ch}(\TT ) = 3 + 4h + 2 h^2 + \frac{2}{3} h^3, \quad c_1 (\TT ) = 4h, \: c_2 (\TT )= 6h^2, \: c_3 (\TT ) = 4h^3. 
\]
In particular, 
\begin{gather}\label{fToddClassP3}
\mathrm{td}(\TT_{\P^3}) = 1 + 2h + \frac{11}{6} h^2 + h^3.
\end{gather}
Now using the exact sequences
\begin{align}
\label{fEulerDual}
\xymatrix{
0 \ar[r] & \Omega^1(2) \ar[r] & 4\OO (1) \ar[r] & \OO (2) \ar[r] & 0
}\\
\label{fExactSeqE}
\xymatrix{
0 \ar[r] & \OO \ar[r] & \OO (1)\oplus \Omega^1 (2) \ar[r] & \EE \ar[r] & 0
}\\
\label{fEulerTwist}
\xymatrix{
0 \ar[r] & \OO (-2) \ar[r] & 4\OO (-1) \ar[r] & \TT (-2) \ar[r] & 0
}\\
\label{fSeqNDual}
\xymatrix{
0 \ar[r] & \NN^{\vee} \ar[r] & \TT (-2) \ar[r] & \OO \ar[r] & 0
}
\end{align}
and the additivity of the Chern character we get
\begin{align}\label{fChernE}
\mathrm{ch}(\EE ) &= 3 + 3h + \frac{1}{2} h^2 - \frac{1}{2} h^3\\
\label{fChernNDual}
\mathrm{ch}(\NN^{\vee}) &= 2 - 2h + \frac{2}{3} h^3. 
\end{align}
Now using the initial sequence (\ref{fIdealSeq}) we get that
\begin{gather}\label{fChernIdeal}
\mathrm{ch}(\II_C (a)) = 1 + 5h + \frac{1}{2} h^2 - \frac{7}{6} h^3 .
\end{gather}
Since there is an exact sequence 
\[
\xymatrix{
0 \ar[r] & \II_C (a)  \ar[r] & \OO (a) \ar[r] & \OO_C (a) \ar[r] & 0
}
\]
and by formula (\ref{fZ}) and the additivity of the Chern character, the first Chern class of $\II_C (a)$ is equal to the first Chern class of $\OO (a)$, which is $ah$. Thus $a=5$. Again, by formula (\ref{fZ}), $c_2 (\OO_C (a))= - \deg(C) h^2$, and this, by the additivity of the Chern character and formula (\ref{fChernChar}), is equal to $- (25/2 - 1/2)h^2= -12h^2$, which checks. Moreover, since the Chern character is a ring homomorphism we get
\[
\mathrm{ch} ( \II_C ) = \mathrm{ch} (\II_C(5) \otimes \OO (-5)) = \mathrm{ch} (\II_C(5) \otimes^L \OO (-5)) = \mathrm{ch} (\II_C(5)). \mathrm{ch}(\OO (-5)) .
\]
Hence, by the Hirzebruch-Riemann-Roch formula (\ref{fHRR}) and formula (\ref{fToddClassP3}), we have that the arithmetic genus is the coefficient in front of $h^3$ in 
\begin{gather*}
\mathrm{ch} (\II_C(5)). \mathrm{ch}(\OO (-5)). \mathrm{td}(\TT_{\P^3}) = \\
      = \left( 1 + 5h + \frac{1}{2} h^2 - \frac{7}{6} h^3 \right) \cdot \left( 1- 5h + \frac{25}{2} h^2 - \frac{125}{6} h^3\right)\cdot \left( 1 + 2h + \frac{11}{6} h^2 + h^3\right) \\
      = 1 + 2h - \frac{61}{6} h^2 + 15 h^3. 
\end{gather*}
Hence $p_a (C) = 15$.
\end{proof}

Let us \textbf{assume from now on that $K$ is a Kummer surface}. So we work now with a tame degeneration of GM fourfolds. 

Let $\pi\colon \tilde{K} \to K$ be the minimal resolution of the nodes, let $E_1, \dots , E_{16}$ be the resulting $(-2)$ curves. Then $\pi^*\omega_K = \omega_{\tilde{K}}$ is trivial, and the arithmetic genus of a divisor $D\subset \tilde{K}$ is
\[
\frac{D^2}{2} +1.
\]
Recall that $\Delta = \Delta_1 \cup \Delta_2$ is the union of two del Pezzo cubic symmetroids such that $\Delta_1\cap \Delta_2$ consists of smooth points on both $\Delta_1$ and $\Delta_2$. 

%\begin{definition}
%We say that $\Delta$ is contact to $K$ in a \emph{simple contact curve} $C$ if $\Delta$ is smooth in all nodes of $K$. 
%\end{definition}

%\begin{remark}
%For a simple contact curve $C$, denoting the strict transform of $C$ on $\tilde{K}$ by $\bar{C}$ we have
%\[
%p_a (C) = p_a (\bar{C}).
%\]
%\end{remark}

We need one more preparatory result on the behavior of the arithmetic genus of curves in our situation. 

\begin{lemma}\label{lArithmeticGenusDrop}
Let $C$ be a purely one-dimensional algebraic subscheme of $K$, that is, $C$ has no embedded points, but may be nonreduced or reducible. Let $\bar{C}$ be its strict transform on $\tilde{K}$, that is, the scheme-theoretic closure of $\pi^{-1}\mid_{\tilde{U}}(C\cap U)$ in $\tilde{K}$ where $U$ is the complement of the nodes in $K$, and $\tilde{U}$ is the complement of the $(-2)$-curves in $\tilde{K}$. 
Let $\nu$ be the number of nodes of $K$ in which the scheme $C$ is singular. Then 
\[
p_a (\bar{C}) \le p_a (C) - \nu .
\]
\end{lemma}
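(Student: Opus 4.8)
The plan is to convert the asserted inequality of arithmetic genera into a statement about the single finite birational morphism $\rho := \pi|_{\bar C}\colon \bar C \to C$, and then to localise the whole question at the nodes of $K$, where it becomes an explicit computation at an $A_1$ singularity. Since $\pi$ is an isomorphism over the smooth locus $U = K \setminus \{\text{nodes}\}$ and $\bar C$ is by definition the closure of $\rho^{-1}(C\cap U)$, the map $\rho$ is finite and restricts to an isomorphism over $U$; in particular it is birational onto $C$ and an isomorphism away from the nodes. Because $C$ is purely one-dimensional without embedded points, $\OO_C$ has no nonzero sections supported on points, so the natural map $\OO_C \to \rho_*\OO_{\bar C}$, whose kernel is supported on the finite node set, is injective. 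We thus get a short exact sequence
\[
0 \to \OO_C \to \rho_*\OO_{\bar C} \to \mathcal Q \to 0,
\]
with $\mathcal Q$ a skyscraper sheaf supported on the nodes of $K$. As $\rho$ is finite (hence affine), $\chi(\rho_*\OO_{\bar C}) = \chi(\OO_{\bar C})$, and using $p_a=1-\chi(\OO)$ we obtain
\[
p_a(C) - p_a(\bar C) = \chi(\OO_{\bar C}) - \chi(\OO_C) = \sum_{P} \delta_P, \qquad \delta_P := \dim_{\C}\mathcal Q_P \ge 0 ,
\]
the sum running over the nodes $P$ of $K$. (Here $\bar C$ is the closure in $\tilde K$ of the Cartier divisor $C\cap U\subset U\cong\tilde U$; since $\tilde K$ is a smooth surface and $C\cap U$ is Cohen--Macaulay of pure dimension one, $\bar C$ is itself an effective Cartier divisor, so it has no embedded points and $p_a(\bar C)=1-\chi(\OO_{\bar C})$ applies.)

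It then suffices to prove the local statement: \emph{if $C$ is singular at a node $P$, then $\delta_P \ge 1$.} I would establish this by a direct computation at the $A_1$ point, passing to completed local rings. Write $\widehat{\OO}_{K,P} \cong \C[[u,v,w]]/(uv - w^2)$; in a resolution chart containing a point $Q$ over $P$ (there are two such charts, interchanged by symmetry), $\pi$ is given by $u = x,\ w = xz',\ v = xz'^2$, so that $\pi^{\#}$ identifies $\widehat{\OO}_{K,P}$ with the subring $A = \C[[x, xz', xz'^2]] \subset \C[[x,z']]$, consisting precisely of the power series supported on monomials $x^I z'^J$ with $J \le 2I$. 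Since $\bar C$ is Cartier on the smooth surface $\tilde K$, its completed local ring at $Q$ is $\C[[x,z']]/(g)$ for a single $g \in \mathfrak m := (x,z')$.

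Now suppose $\delta_P = 0$. Then $\rho$ is an isomorphism near $P$, so there is a unique point $Q$ over $P$ and $\pi^{\#}$ induces an isomorphism $\widehat{\OO}_{C,P} \cong \C[[x,z']]/(g)$; equivalently $A + (g) = \C[[x,z']]$. Reducing this equality modulo $\mathfrak m^2$ and using that the image of $A$ in $\C[[x,z']]/\mathfrak m^2=\langle 1,x,z'\rangle$ is only $\langle 1, x\rangle$ (the class of $z'$ is not in $A$), one finds that $(g) \bmod \mathfrak m^2$ must contain a vector with nonzero $z'$-component; this forces $\mathrm{ord}(g) = 1$ with linear part not proportional to $x$. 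Hence $\C[[x,z']]/(g) \cong \C[[t]]$ is regular, i.e.\ $C$ is smooth at $P$. By contraposition, if $C$ is singular at $P$ then $\delta_P \ge 1$. Summing over the $\nu$ nodes where $C$ is singular yields $\sum_P \delta_P \ge \nu$, whence $p_a(\bar C) \le p_a(C) - \nu$.

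The main obstacle is exactly this local claim: one must rule out that resolving the ambient node leaves a singularity of $C$ untouched, and this is what the modulo-$\mathfrak m^2$ analysis of the subring $A$ accomplishes, simultaneously covering the reduced and non-reduced cases (if $\widehat{\OO}_{C,P}$ is non-reduced it cannot be the regular ring $\C[[t]]$, so $\delta_P=0$ is impossible). The only other point needing care is the bookkeeping ensuring that $\bar C$ is an honest Cartier divisor, so that the local equation $g$ exists and $p_a(\bar C)=1-\chi(\OO_{\bar C})$ holds; this follows from $C$ being Cohen--Macaulay of pure dimension one together with the smoothness of $\tilde K$.
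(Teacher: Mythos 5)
Your argument is correct, and it shares its global skeleton with the paper's proof: both start from the exact sequence $0 \to \OO_C \to \pi_*\OO_{\bar C} \to \mathcal{Q} \to 0$ (injectivity from the absence of embedded points), convert the genus drop into $\sum_P \delta_P$, and reduce to showing $\delta_P\ge 1$ at each node where $C$ is singular. Where you genuinely diverge is in the proof of that local claim. The paper splits into two cases: if $C^{\mathrm{red}}$ is already singular at the node it invokes embedded resolution of the reduced curve to rule out $\pi$ being a local isomorphism; if $C$ has smooth reduction but a non-reduced scheme structure it normalizes the local picture to a multiple line on a quadric cone and runs a global degeneration argument (flat families of $m$ lines, arithmetic genus of multiple fibers on $\F_2$) to compare $p_a$ directly. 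You instead prove the contrapositive by one uniform completed-local-ring computation at the $A_1$ point: if $\delta_P=0$ then $\pi^{\#}$ surjects onto $\C[[x,z']]/(g)$, and since the image of $\widehat{\OO}_{K,P}$ modulo $\mathfrak m^2$ is only $\langle 1,x\rangle$, the local equation $g$ is forced to have a linear part transverse to $x$, making $\widehat{\OO}_{C,P}$ regular. This buys uniformity (reduced and non-reduced cases are handled at once, including non-reducedness, since a non-reduced local ring cannot be $\C[[t]]$) and avoids the paper's somewhat delicate reduction to the multiple-line model and the flat-limit bookkeeping; the paper's route, on the other hand, yields as a by-product the explicit genus of a multiple ruling line, which it exploits in the appendix material. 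The supporting points you flag (finiteness of $\rho$, $\bar C$ being Cartier on the smooth $\tilde K$ so that $g$ exists and $p_a(\bar C)=1-\chi(\OO_{\bar C})$, uniqueness of the point $Q$ over $P$ when $\delta_P=0$) all check out, so I see no gap.
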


\begin{proof}
The projection $\pi$ induces a morphism of schemes $\pi_{\bar{C}}\colon \bar{C} \to C$; namely, clearly $\bar{C}$ is contained in the scheme-theoretic preimage of $C$ under $\pi$. 
We have an exact sequence
\[
\xymatrix{
0 \ar[r] & \OO_C \ar[r] & \pi_{*}\OO_{\bar{C}} \ar[r] & \FF \ar[r] & 0
}
\]
with $\FF$ supported in the nodes. Note that for the injectivity of the arrow $\OO_C \to \OO_{\bar{C}}$ we use that $C$ has no embedded points. We want to show that the stalk of $\FF$ is nonzero at every node $p$ of $K$ where $C$ is singular. This can be checked locally analytically. We distinguish two cases. Either the reduction $C^{\mathrm{red}}$ of $C$ is already singular at the node. Then $\pi_C$ can not be an isomorphism locally above $p$ because otherwise the associated reductions $\bar{C}^{\mathrm{red}}$ and $C^{\mathrm{red}}$ would be isomorphic as well, but this would contradict the fact that we can resolve the singularities of $C^{\mathrm{red}}$ by an embedded resolution, blowing up a sequence of (infinitely near) points in $\P^3$ successively. 
In the second case, $C$ has smooth reduction $C^{\mathrm{red}}$ at $p$, but the scheme $C$ is singular there. Hence we can assume that $C$ is a line of a ruling on a quadric cone $Q$ in $\P^3$, with a multiple structure $m > 1$. We only have to show that its strict transform on the blowup $\tilde{Q}$ of $Q$ at the vertex has strictly smaller arithmetic genus. 
Now $\tilde{Q}$ is isomorphic to the Hirzebruch surface 
\[
\F_2 = \P \left( \OO_{\P^1}\oplus \OO_{\P^1} (2)\right)
\]
and $\bar{C}$ becomes a multiple fiber, hence has self-intersection zero; the canonical class of $\F_2$ is $-2b-4f$ where $b$ is the class of the negative section, which is the $(-2)$-curve, and $f$ is a fiber. Hence the arithmetic genus of $\bar{C}$ on $\F_2$ is $1-m$. To estimate the arithmetic genus of $C$ on $Q$ we use the following observation: the arithmetic genus of $\bar{C}$ is that of $m$ disjoint $\P^1$'s. Now let $L^{m}\subset Q$ be a union of $m$ different lines of the ruling of $Q$. Let $\nu\colon \tilde{L}^m \to L^m$ be the normalization, a union of $m$ disjoint $\P^1$'s. The exact sequence 
\[
\xymatrix{
0 \ar[r] & \OO_{L^m} \ar[r] & \nu_{*}\OO_{\tilde{L}^m} \ar[r] & \GG \ar[r] & 0
}
\]
shows that $L^m$ has arithmetic genus $> 1-m$. Now we consider a flat family $L^m_t$, $0 < t < \epsilon$ of such unions of $m$ lines of the ruling of $Q$ such that for $t\to 0$ the $m$ lines come together. Let $L^m_0$ be the flat limit of this one-parameter family of schemes. It coincides with the scheme $C$, whose arithmetic genus we are interested in estimating, outside of the vertex of the cone $Q$, but it may have an embedded component supported at the vertex. Since the arithmetic genus is constant in flat families, $p_a (L^m_0) = p_a (L^m_t) > 1-m$. But $p_a (C) \ge p_a (L^m_0)$ because of the exact sequence of sheaves on $Q$
\[
\xymatrix{
0 \ar[r] & \KK \ar[r] & \OO_{L^m_0} \ar[r] & \OO_{C} \ar[r] & 0
}
\]
where $\KK$ is supported at the vertex in case there is an embedded component. In any case, $p_a (C) > 1-m$, which is what we wanted to show.
\end{proof}

\begin{lemma}\label{lCurvesType}
Assume $K$ is a Kummer surface with desingularization $\pi\colon \tilde{K} \to K$ as before, and that $\Delta$ is a sextic as above.  Then the class of the strict transform $\bar{C}$ in $\mathrm{Pic}(\tilde{K})$ of the contact curve $C$ is equal to
\[
\bar{C}\equiv \frac{1}{2} \left( 6H - E_1 - \dots - E_{16} \right)
\]
where $H$ is the pull back of the hyperplane class $H$ in $\P^3$ and $\equiv$ denotes linear equivalence. 
In particular, $\Delta$ and $K$ have even contact in such a way that $\Delta$ is smooth in every node of $K$. 
\end{lemma}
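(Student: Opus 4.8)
The plan is to transport the even-contact relation of Lemma~\ref{lContactSeparately} to the resolution $\pi\colon \tilde K\to K$ and to pin down the class of $\bar C$ using its degree together with its two competing arithmetic genera. Since $\Delta$ is a sextic in $\P^3$, the restriction $\Delta|_K$ is an effective \emph{Cartier} divisor on $K$ in the class $6H|_K$, and by Lemma~\ref{lContactSeparately} it agrees with $2C$ away from the nodes; as Weil divisors are determined in codimension one and the nodes are codimension two, $\Delta|_K = 2C$ as Weil divisors. Pulling back this Cartier divisor and using the projection formula $\pi^*(\,\cdot\,)\cdot E_i = 0$, I would write
\[
6H \;=\; \pi^*(\Delta|_K) \;=\; 2\bar C + \sum_{i=1}^{16} a_i E_i, \qquad a_i := \bar C\cdot E_i,
\]
so that $\bar C \equiv 3H - \tfrac12\sum_i a_i E_i$, and everything reduces to proving $a_i = 1$ for all $i$.

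First I would record two facts. Each node lies on $C$: the nodes of $K$ are exactly the rank-zero points of $\psi$, where the image of $\NN^\vee\to\EE$ falls into the line $\ker(\EE\to\NN)$, so the map $\NN^\vee\to\EE$ defining $C$ (Lemma~\ref{lContactSeparately}(b)) drops to rank $\le 1$ there; hence $\bar C$ meets every $E_i$ and $a_i\ge 1$. Next, adjunction on the K3 surface $\tilde K$ gives $p_a(\bar C)=\tfrac12\bar C^2+1 = 19 - \tfrac14\sum_i a_i^2$, while $p_a(C)=15$ by Lemma~\ref{lContactK}. Thus $a_i=1$ for all $i$ is precisely the case of equality $p_a(\bar C)=p_a(C)$, i.e.\ of no genus drop at all.

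The crux is to force $a_i=1$. The governing dictionary is that $a_i=1$ holds exactly when $C$ is a single smooth (necessarily non-Cartier) branch at $p_i$, equivalently when the sextic $\Delta$ is smooth there, whereas $a_i\ge 2$ means $C$ is singular at the node. I would combine two inputs. \emph{Integrality}: $\bar C$ is an honest divisor class, so $\tfrac12\sum_i a_i E_i\in\Pic(\tilde K)$, which by the theory of the Kummer lattice constrains $\{i:a_i\text{ odd}\}$ to be an even set of nodes (for a Kummer surface these even sets form the Reed--Muller code $\mathrm{RM}(1,4)$, of weights $0,8,16$; in particular the full set of $16$ is even, so the target class is integral). \emph{Local analysis at the nodes}: bounding the branching of $C$ at each node so that $a_i\le 2$, for which Remark~\ref{rNoLines} (that $K$, hence $C$, contains no lines) and the even contact of the two cubic symmetroids at a general node should be used. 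Granting $a_i\in\{1,2\}$, the genus drop lemma (Lemma~\ref{lArithmeticGenusDrop}) closes the argument cleanly: if $p$ nodes have $a_i=2$ then the class predicts $p_a(\bar C)=15-\tfrac34 p$, while the lemma demands $p_a(\bar C)\le 15-\nu=15-p$, and comparing gives $\tfrac34 p\ge p$, hence $p=0$.

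I expect the \textbf{main obstacle} to be exactly this local control at the nodes: ruling out that $C$ acquires a point of multiplicity $\ge 3$ there. The crude estimate of Lemma~\ref{lArithmeticGenusDrop} ($\text{drop}\ge\nu$) is by itself too weak to exclude $a_i\ge 3$, since a single such node already overspends no genus budget under that bound; one must either compute the precise local genus drop at a node of $K$ or exploit the specific geometry of $\Delta_1,\Delta_2$ (and the even-set constraint on parities) to cut the possibilities down to $a_i\le 2$. Once $a_i=1$ for all $i$, we obtain $\bar C\equiv\tfrac12(6H-E_1-\cdots-E_{16})$, integral because all sixteen nodes form an even set; and since $C$ is a single smooth branch at each node with $\Delta|_K=2C$, the sextic $\Delta$ is smooth at every node of $K$ and has even contact with $K$ there.
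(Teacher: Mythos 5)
Your framework is the paper's: write $\bar C\equiv\tfrac12\bigl(6H-\sum_i a_iE_i\bigr)$, compute $p_a(\bar C)=19-\tfrac14\sum_i a_i^2$ by adjunction on the K3 surface $\tilde K$, compare with $p_a(C)=15$ from Lemma \ref{lContactK}, and use Lemma \ref{lArithmeticGenusDrop} to charge a genus drop of at least $1$ to each node where $a_i=2$. Your closing count ($15-\tfrac34 p\le 15-p\Rightarrow p=0$) is correct granted $a_i\in\{1,2\}$, and in fact you do not even need $a_i\ge 1$: the paper's version of the count, $\sum_{a_i\le 1}a_i^2\ge 16$, forces all sixteen $a_i$ to equal $1$ while allowing $a_i=0$ a priori, so your rank-zero-points argument for $a_i\ge1$ (which as stated needs justification -- a rank-$1$ point of $\psi$ could in principle also be a singular point of $\det\psi$) can be dispensed with entirely.

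The genuine gap is the one you flag yourself: you never prove $a_i\le 2$, and your proposed routes (no lines on $K$, even sets of nodes, a sharper local genus-drop computation) are not what is needed and would not obviously succeed. The paper closes this in one line by identifying $a_i$ not with a branching invariant of $C$ at the node but with the \emph{multiplicity of the sextic $\Delta$ at the node}: since $\pi^*(\Delta|_K)=\overline{\Delta|_K}+\sum_i m_iE_i$ with $m_i=\mathrm{mult}_{p_i}\Delta$, the coefficient $a_i$ in the class of $\bar C=\tfrac12\overline{\Delta|_K}$ is exactly $m_i$. Now the hypotheses of Definition \ref{dAdmissibleDegeneration2} do the work: each $\Delta_j$ is a del Pezzo cubic symmetroid, hence has only rational double points and so multiplicity at most $2$ at every point, and by condition (2)(b) both $\Delta_1$ and $\Delta_2$ are smooth along $\Delta_1\cap\Delta_2$, so at a common point the union has multiplicity $1+1=2$. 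Therefore $a_i\le 2$ always, and the counting argument you already have finishes the proof. Without this reinterpretation of $a_i$, the possibility $a_i\ge 3$ is indeed not excluded by the genus-drop estimate alone, so as written your argument is incomplete at precisely its load-bearing step.
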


\begin{proof}

Clearly the strict transform $\bar{C}$ of $C$ is a divisor on $\tilde{K}$ that satisfies
\[
\frac{1}{2} \left( 6H - a_1E_1 - \dots - a_{16}E_{16} \right)
\]
for some positive integers $a_1, \dots , a_{16}$. Moreover, $a_i$ is equal to the multiplicity of $\Delta$ in the node of $K$ corresponding to $E_i$, hence by our assumption, each $a_i$ is at most $2$ since the $\Delta_i$ individually only have rational double point singularities, and $\Delta_1\cap \Delta_2$ consists of smooth points of both $\Delta_1$ and $\Delta_2$. Note that here we are using property (b) of Definition \ref{dAdmissibleDegeneration2}, that each of $\Delta_1$ and $\Delta_2$ are smooth in each point of $\Delta_1\cap \Delta_2$, in an essential way: otherwise, one of the $a_i$ might be $3$, for example, and the following argument fails. 

The arithmetic genus of $C$ is $15$ and the arithmetic genus of $\bar{C}$ can only decrease by Lemma \ref{lArithmeticGenusDrop}, so by the genus formula
\[
15\ge p_a (\bar{C}) = \frac{1}{2} \left( 6^2 - \frac{1}{2} \sum_{i=1}^{16} a_i^2 \right) +1.
\]
Moreover, for every $i$ with $a_i=2$, the arithmetic genus of $\bar{C}$ decreases by at least $1$ compared to that of $C$ by Lemma \ref{lArithmeticGenusDrop} because then $C$ is singular at the corresponding node.  
Hence denoting $I^{0,1}\subset \{ 1, \dots , 16 \}$ the subset of indices for which $a_i$ is zero or one, we obtain
\begin{gather*}
\sum_{i\in I^{0,1}} a_i^2 \ge 16
\end{gather*}
which is absurd unless all of the original $a_i$ are equal to $1$. 
Hence we get 
\begin{gather}\label{fSumCoeff}
\sum_{i=1}^{16} a_i^2 =16
\end{gather}
and both assertions.
\end{proof}

%In view of Lemma \ref{lContactSeparately} and the fact that by Lemma \ref{lCurvesType}, $\Delta$ is smooth in all the nodes of $K$, we get that
%\[
%\Delta_1 \cap K = 2C_1, \quad \Delta_2\cap K = 2 C_2
%\]
%for some pure one-dimensional schemes $C_1$, $C_2$ in $\P^3$. We can view $C_1$ and $C_2$ as Weil divisors on $K$, and we will denote their strict transforms on $\tilde{K}$ by $\bar{C}_1$ and $\bar{C}_2$, which are Cartier divisors. 
%Note that $C_1\cup C_2= C$.

\begin{lemma}\label{lCurvesBreakUp}
We have, after possibly renumbering the $E_i$ and interchanging the roles of $\Delta_1$ and $\Delta_2$, the following equality in $\mathrm{Pic}(\tilde{K})$: 
\begin{align*}
\bar{C}_1 & \equiv \frac{1}{2} \left( 3H - E_1 - \dots - E_{10} \right), \\
\bar{C}_2 & \equiv \frac{1}{2} \left( 3H - E_{11} - \dots - E_{16} \right) . 
\end{align*}
Moreover, the purely one-dimensional subschemes $C_1$ and $C_2$ of $K$ have degree $6$ and $p_a (\bar{C}_1) = 3$ and $p_a (\bar{C}_2) = 4$. 
\end{lemma}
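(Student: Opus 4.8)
The plan is to combine the global description of $\bar{C}$ on $\tilde{K}$ from Lemma \ref{lCurvesType} with the splitting $C=C_1+C_2$ coming from $\Delta=\Delta_1\cup\Delta_2$, and to reduce the whole statement to counting how the $16$ nodes of $K$ distribute between $\Delta_1$ and $\Delta_2$. First I would record the easy numerics. By Lemma \ref{lContactSeparately} we have $2C_j=\Delta_j\cap K$, and since each $\Delta_j$ is a cubic this intersection has degree $12$, so $\deg C_j=6$; hence on $\tilde{K}$ the class of $\bar{C}_j$ has $H$-coefficient $3/2$, i.e. $\bar{C}_j\equiv\frac12\bigl(3H-\sum_i a_i^{(j)}E_i\bigr)$. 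To pin down the node coefficients, recall that Lemma \ref{lCurvesType} gives $\bar{C}\equiv\frac12\bigl(6H-\sum_{i=1}^{16}E_i\bigr)$, so every node of $K$ lies on $\Delta$ and $\Delta$ has multiplicity exactly one at each of them. Since $\Delta=\Delta_1\cup\Delta_2$ and the multiplicity of $\Delta$ at a node is the sum of those of $\Delta_1$ and $\Delta_2$, at each node exactly one component passes through, with multiplicity one. This partitions the nodes as $S_1\sqcup S_2$, and matching coefficients in $\bar{C}=\bar{C}_1+\bar{C}_2$ forces
\[
\bar{C}_j\equiv\tfrac12\Bigl(3H-\sum_{i\in S_j}E_i\Bigr).
\]

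Writing $k=|S_1|$, adjunction on the K3 surface $\tilde{K}$ expresses $\bar{C}_j^2$ purely in terms of $k$, whence $p_a(\bar{C}_1)=(22-k)/4$ and $p_a(\bar{C}_2)=(6+k)/4$; integrality forces $k\equiv 2\pmod 4$, so $k\in\{2,6,10,14\}$. Moreover $\bar{C}_1\cdot\bar{C}_2=\frac14\cdot 9H^2=9$ independently of $k$, so from $p_a(\bar{C})=15$ (Lemma \ref{lContactK}) and the union formula one gets $p_a(\bar{C}_1)+p_a(\bar{C}_2)=p_a(\bar{C})-\bar{C}_1\cdot\bar{C}_2+1=7$, consistent for every admissible $k$. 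Thus the entire assertion is equivalent to showing $k=10$ up to interchanging $\Delta_1$ and $\Delta_2$, that is, to excluding the extreme values $k\in\{2,14\}$.

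To determine $k$ I would compute $p_a(C_j)$ intrinsically on the del Pezzo cubic symmetroid $\Delta_j$. Since $p_a(\bar{C})=15=p_a(C)$, Lemma \ref{lArithmeticGenusDrop} gives $\nu=0$, so $C$, and hence each $C_j$, is smooth at every node of $K$; therefore $p_a(\bar{C}_j)=p_a(C_j)$. On the other hand $2C_j=K|_{\Delta_j}=4H|_{\Delta_j}$ shows $C_j$ lies in the class $2H_{\Delta_j}$, so on the minimal resolution $\rho_j\colon S_j\to\Delta_j$, a weak del Pezzo surface of degree $3$ with $-K_{S_j}=\rho_j^*H$, the strict transform is $\hat{C}_j\equiv -2K_{S_j}-\sum_\ell m_\ell^{(j)}F_\ell$, the $F_\ell$ being the $(-2)$-curves over the rational double points of $\Delta_j$. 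A computation gives
\[
p_a(\hat{C}_j)=4+\tfrac12\Bigl(\sum_\ell m_\ell^{(j)}F_\ell\Bigr)^2\le 4,
\]
since $-2K_{S_j}$ has genus $4$ (the genus of a smooth $(2,3)$ complete intersection) and the ADE lattice spanned by the $F_\ell$ is negative definite. Granting that the contact curve is unibranch at the singular points of $\Delta_j$, so that $p_a(C_j)=p_a(\hat{C}_j)$, we obtain $p_a(\bar{C}_j)\le 4$ for both $j$; together with $p_a(\bar{C}_1)+p_a(\bar{C}_2)=7$ this forces $\{p_a(\bar{C}_1),p_a(\bar{C}_2)\}=\{3,4\}$, hence $k\in\{6,10\}$ and $\{|S_1|,|S_2|\}=\{10,6\}$. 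Renumbering so that $\Delta_1$ meets the first ten $(-2)$-curves yields exactly the stated classes and genera.

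The main obstacle is the last analytic input: verifying that $p_a(C_j)=p_a(\hat{C}_j)$, i.e. that the contact curve acquires no extra arithmetic genus at the singular points of $\Delta_j$ and so cannot push $p_a(C_j)$ above $4$. This is genuinely local and is where the geometry of the three symmetroid types enters. I would handle it using the normal forms of $\varphi_0$ near the relevant points established in the proof of Lemma \ref{lContactSeparately}, together with the classification of $\Delta_{4A_1},\Delta_{2A_1+A_3},\Delta_{A_5+A_1}$ and their line counts (Remark \ref{rLinesOnAdmissible}), to show that the branches of $C_j$ through each rational double point are the strict transforms of smooth rulings and contribute no $\delta$-invariant. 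Once this is in hand, the bound $p_a(\bar{C}_j)\le 4$ and the genus sum $7$ finish the proof.
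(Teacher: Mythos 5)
Your numerical setup reproduces the paper's argument exactly: the classes $\bar{C}_j\equiv\tfrac12\bigl(3H-\sum_{i\in S_j}E_i\bigr)$ with $\{0,1\}$-coefficients forced by Lemma \ref{lCurvesType}, the integrality constraint $|S_1|\in\{2,6,10,14\}$ from adjunction on the K3 surface $\tilde K$, and the reduction of the whole lemma to excluding the split $\{p_a(\bar C_1),p_a(\bar C_2)\}=\{5,2\}$. Where you diverge is in that exclusion, and that is exactly where your argument has a genuine gap. The paper excludes genus $5$ by Theorem \ref{tSexticCurves}, a statement about arbitrary one-dimensional subschemes of degree $6$ and $p_a\ge 5$ on an irreducible cubic, proved in the appendix via generic initial ideals; this requires no control whatsoever over how $C_1$ meets the singular points of $\Delta_1$. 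Your replacement computes $p_a(\hat C_j)\le 4$ for the strict transform $\hat C_j$ on the minimal resolution of the cubic symmetroid, which is correct, but the inequality you then need runs the wrong way: resolving singularities can only \emph{decrease} arithmetic genus, so $p_a(C_j)\ge p_a(\hat C_j)$, and the bound $p_a(\hat C_j)\le 4$ says nothing about $p_a(C_j)$ unless you prove $p_a(C_j)=p_a(\hat C_j)$. That equality is precisely the assertion that $C_j$ acquires no $\delta$-invariant at the rational double points of $\Delta_j$ (note that ``unibranch'' would not even suffice: a cusp is unibranch and still contributes). You flag this yourself as ``the main obstacle'' and only sketch a strategy for it; a priori the contact locus $K\cap\Delta_1$ could behave badly at an $A_1$, $A_3$ or $A_5$ point of $\Delta_1$, and nothing established earlier in the paper (the normal forms in Lemma \ref{lContactSeparately} are taken at points where the conic has rank $2$, not at the rank-$1$ points over which the symmetroid singularities sit) rules this out. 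Until that local analysis is carried out, the case $k=2$ is not excluded and the proof is incomplete; the paper's gin-theoretic Theorem \ref{tSexticCurves} is the device that makes this local analysis unnecessary.

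One smaller remark: your observation that $p_a(\bar C)=15=p_a(C)$ forces $\nu=0$ in Lemma \ref{lArithmeticGenusDrop}, so that $C$ is smooth at every node of $K$ and hence $p_a(\bar C_j)=p_a(C_j)$, is correct and is a slightly cleaner way to see what the paper obtains by bounding the coefficients $a_i$; but it only transfers the problem from $\bar C_j$ to $C_j$ and does not help with the singularities of $\Delta_j$, which live away from the nodes of $K$ in general.
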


\begin{proof}
Suppose $\Delta$ decomposes into two del Pezzo cubic symmetroids as in an admissible degeneration. Then we have
\begin{align*}
\bar{C}_1 & \equiv \frac{1}{2} \left( 3H - \sum_{i\in I} b_i E_i \right), \\
\bar{C}_2 & \equiv \frac{1}{2} \left( 3H - \sum_{j\in J} c_j E_j  \right) . 
\end{align*}
with $I\sqcup J =\{ 1, \dots , 16\}$. Moreover, each $b_i$ and $c_j$ is either $0$ or $1$. The genus of $\bar{C}_1$ is then equal to 
\[
p_a (\bar{C}_1) = \frac{1}{2} \left( 9 - \frac{1}{2} \sum_{i\in I } b_i^2 \right) +1 = \frac{11}{2} - \frac{1}{4} \sum_{i\in I } b_i^2. 
\]
For this to be an integer, we only have the following possibilities for the cardinality of $I$:
\[
| I | = 2, \: 6, \: 10, \: 14.
\]
The corresponding possible genera are
\[
p_a (\bar{C}_1) = 5,\: 4, \: 3, \: 2. 
\]
Note that the same restrictions hold for $|J|$ and $p_a (\bar{C}_2)$. Together with the requirement that $|I\sqcup J|=16$, it implies that we can only have, after possibly interchanging the roles of $C_1$ and $C_2$, the following two possibilities:
\begin{gather}\label{fPossibleCurves}
p_a (\bar{C}_1) = 3, |I|=10, \quad p_a (\bar{C}_2) = 4, |I|=6, \\ \nonumber
p_a (\bar{C}_1) = 5, |I|=2, \quad p_a (\bar{C}_2) = 2, |I|=14 .
\end{gather}
The first line in formula (\ref{fPossibleCurves}) leads to the situation described in the Lemma, thus we only have to rule out that $p_a (\bar{C}_1) =5$ can occur. In that case $C_1$ would be a sextic that has arithmetic genus bigger than or equal to $5$ (by Lemma \ref{lArithmeticGenusDrop}, since $C_1$ is pure-dimensional by its definition) lying on an irreducible cubic surface $\Delta_1$. 
This is impossible by Theorem \ref{tSexticCurves} in Appendix \ref{aBothmer}. 
\end{proof}

%\begin{definition}\label{dFormat}
%We call a variety $Z\subset \P^3$ determinantal of type
%\[
%\begin{pmatrix}
%d_{11} & \dots & d_{1n}\\
%\vdots  & \ddots & \vdots \\
%d_{m1} & \dots & d_{mn}
%\end{pmatrix}
%\]
%for nonnegative integers $d_{ij}$ if there exists a $m\times n$ matrix of homogeneous polynomials $M$ of degrees as specified in the matrix above such that $Z$ is the scheme defined by the maximal minors of $M$ and $Z$ is of expected codimension $|m-n|+1$.
%\end{definition}

\begin{definition}\label{dTropes}
A \emph{trope} on a Kummer surface is a conic $T$ on $K$ such that $2T$ is cut out on $K$ by some plane $H$ in $\P^3$ passing through $6$ of the nodes on $K$. To distinguish $H$ from $T$, we will call $H$ a \emph{trope plane} in this case.
\end{definition}

This is equivalent to \cite[Def. 1.11]{G-D94/1}. 

\begin{remark}\label{rTropesIncidence}
It is classically known, see e.g. \cite[10.3]{Dol12}, \cite{Hu05}, \cite{G-D94/1} or \cite{G-D94/2}, that there are sixteen tropes on a Kummer surface $K$ forming a \emph{$(16, 6)$ configuration} with the $16$ nodes of $K$: this means that every trope plane contains exactly six of the nodes, and every node lies on exactly six trope planes.
Moreover, for every Kummer surface, the configuration of nodes and trope planes is \emph{non-degenerate} \cite[Prop. 1.12]{G-D94/1}, meaning that every two trope planes share exactly two nodes, and every pair of nodes is contained in exactly two trope planes. 

Moreover, the entire incidence relations between nodes and tropes on a Kummer surface can be very compactly displayed by the following diagrams, see \cite[Ch. 1, \S 5]{Hu05}, \cite[p. 523]{Dol12}, \cite[Lemma 1.4, Prop. 1.12]{G-D94/1}:
\[
\begin{matrix}
\times & \times & \times & \times \\
\times & \times & \times & \times \\
\times & \times & \times & \times \\
\times & \times & \times & \times 
\end{matrix}
\quad \quad \quad 
\begin{matrix}
\bullet & \bullet & \bullet & \bullet \\
\bullet & \bullet & \bullet & \bullet \\
\bullet & \bullet & \bullet & \bullet \\
\bullet & \bullet & \bullet & \bullet 
\end{matrix}
\]
Here the $\times$ represent trope planes and the $\bullet$ represent nodes. To find out which nodes a given trope plane contains, go to the position of the node corresponding to the position of the trope plane and take all nodes in the same row and column, excluding the distinguished node. Thus in the diagram
\[
\begin{matrix}
\times & \times & \times & \times \\
\times & \sharp & \times & \times \\
\times & \times & \times & \times \\
\times & \times & \times & \times 
\end{matrix}
\quad \quad \quad 
\begin{matrix}
\bullet & \circ & \bullet & \bullet \\
\circ & \bullet & \circ & \circ \\
\bullet & \circ & \bullet & \bullet \\
\bullet & \circ & \bullet & \bullet 
\end{matrix}
\]
the nodes in trope plane $\sharp$ are the ones marked $\circ$. In particular, this illustrates the fact that the intersection of two trope planes contains exactly two nodes, and there are no three trope planes passing through a line. For example, the following diagram shows the two nodes contained in the marked trope planes:
\[
\begin{matrix}
\times & \times & \sharp & \times \\
\sharp & \times & \times & \times \\
\times & \times & \times & \times \\
\times & \times & \times & \times 
\end{matrix}
\quad \quad \quad 
\begin{matrix}
\circ & \bullet & \bullet & \bullet \\
\bullet & \bullet & \circ & \bullet \\
\bullet & \bullet & \bullet & \bullet \\
\bullet & \bullet & \bullet & \bullet 
\end{matrix}
\]
and read from right to left, dually, the only two trope planes containing the two nodes $\circ$ are the ones marked $\sharp$.
\end{remark}

\begin{lemma}\label{lDescriptionCurves}
With the set-up and notation of Lemma \ref{lCurvesBreakUp}, we have that $C_2$ is a complete intersection of a quadric and a cubic in $\P^3$. 
\end{lemma}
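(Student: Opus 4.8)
The plan is to produce a quadric $Q\subset\P^3$ containing $C_2$ and then to argue that $Q\cap\Delta_2$ is already equal to $C_2$; since $\Delta_2$ is a cubic, this exhibits $C_2$ as a complete intersection of type $(2,3)$. Recall from Lemma~\ref{lContactSeparately} that $\Delta_2$ cuts out $2C_2$ on $K$, so that $C_2\subseteq\Delta_2$ scheme-theoretically, and that $C_2$ is purely one-dimensional without embedded points, hence Cohen--Macaulay; by Lemma~\ref{lCurvesBreakUp}, $\deg C_2=6$ and $\bar C_2\equiv\tfrac12(3H-E_{11}-\dots-E_{16})$. The structural input I would isolate first is that $\Delta_2$, being a del Pezzo cubic symmetroid, is a normal and in particular irreducible cubic surface; consequently $\Delta_2$ is contained in no quadric, since a nonzero quadratic form cannot be divisible by an irreducible cubic form. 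Thus \emph{any} nonzero quadric through $C_2$ automatically fails to contain $\Delta_2$, which is exactly what is needed to make $Q\cap\Delta_2$ a genuine complete-intersection curve.

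The heart of the argument is the existence of such a quadric, which I would obtain on the K3 surface $\tilde K$ so as to avoid analysing the singularities of $C_2$ at the nodes of $K$. Using $H^2=4$, $E_i^2=-2$ and $H\cdot E_i=0$ one computes
\[
2H-\bar C_2=\tfrac12\bigl(H+E_{11}+\dots+E_{16}\bigr),\qquad (2H-\bar C_2)^2=-2,\qquad (2H-\bar C_2)\cdot H=2 .
\]
Since $H$ is nef and $(2H-\bar C_2)\cdot H=2>0$, the class $-(2H-\bar C_2)$ cannot be effective, so $h^2=0$ and Riemann--Roch on the K3 surface gives $h^0(\OO_{\tilde K}(2H-\bar C_2))\ge\chi=\tfrac{(2H-\bar C_2)^2}{2}+2=1$; hence $2H-\bar C_2$ is effective and, adding $\bar C_2$, we get an effective divisor in $|2H|$ containing $\bar C_2$. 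Because $\pi_*\OO_{\tilde K}(2H)=\OO_K(2)$ and the restriction $H^0(\P^3,\OO(2))\to H^0(K,\OO_K(2))$ is surjective (from $0\to\OO(-2)\to\OO(2)\to\OO_K(2)\to0$ and $H^1(\P^3,\OO(-2))=0$), this section descends to a quadric $Q$ containing $C_2$. As an alternative one can run Riemann--Roch directly on $C_2$: since $\bar C_2\cdot E_i=1$, the strict transform meets each exceptional curve transversally in one reduced point, so $C_2$ is smooth at the six nodes; the sequence $0\to\OO_{C_2}\to\pi_*\OO_{\bar C_2}\to\FF\to0$ of Lemma~\ref{lArithmeticGenusDrop} then has $\FF=0$ and $p_a(C_2)=p_a(\bar C_2)=4$, whence $h^0(\II_{C_2}(2))\ge10-\chi(\OO_{C_2}(2))=10-9=1$ once $h^1(\OO_{C_2}(2))=0$ is verified.

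Granting the quadric $Q$, the conclusion is formal. Set $Z:=Q\cap\Delta_2$. By the first paragraph $\Delta_2\not\subseteq Q$, so $Z$ is a complete intersection of a quadric and a cubic: a Cohen--Macaulay, purely one-dimensional scheme of degree $6$ without embedded points. We have $C_2\subseteq Z$ (as $C_2\subseteq Q$ and $C_2\subseteq\Delta_2$) and $\deg C_2=6=\deg Z$. Comparing one-cycles, $[C_2]\le[Z]$ with equal total degree forces $[C_2]=[Z]$, so the kernel of the surjection $\OO_Z\twoheadrightarrow\OO_{C_2}$ is supported in dimension $0$; but $Z$, being Cohen--Macaulay of pure dimension one, admits no nonzero subsheaf of finite length, and therefore $C_2=Z=Q\cap\Delta_2$.

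The main obstacle is the existence and the scheme-theoretic bookkeeping of the quadric, not the final identification. Concretely, the delicate points are (i) passing cleanly between divisors on $\tilde K$ and subschemes of $\P^3$, i.e.\ checking that a section of $\OO(2)$ vanishing on the Cartier divisor $\bar C_2\subset\tilde K$ genuinely lies in $\II_{C_2}$ after descending through the resolution and taking closures at the six nodes, and (ii) in the direct approach, verifying $h^1(\OO_{C_2}(2))=0$ for a possibly reducible or non-reduced $C_2$. Both are controlled by the smoothness of $C_2$ at the nodes recorded above, which is precisely what makes the numerology of a true $(2,3)$ complete intersection (degree $6$, genus $4$) consistent with $p_a(\bar C_2)=4$ and, by contrast, explains why the genus-$3$ curve $C_1$ need not lie on any quadric.
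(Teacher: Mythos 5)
Your argument is correct, but it reaches the conclusion by a genuinely different route from the paper. The paper's proof is a two-line reduction to its Appendix: since $C_2$ is purely one-dimensional and receives a surjection from $\bar{C}_2$, Lemma~\ref{lArithmeticGenusDrop} gives $p_a(C_2)\ge p_a(\bar{C}_2)=4$; Theorem~\ref{tSexticCurves} (proved via Green's generic initial ideals and the Ellia--Peskine bound) caps the genus of a degree-$6$ scheme on an irreducible cubic at $4$; and Proposition~\ref{pCompleteIntersection} then reads off from the unique surviving gin diagram that $f(2,0)=\bullet$, i.e.\ that $C_2$ lies on a quadric, forcing the $(2,3)$ complete intersection. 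You instead manufacture the quadric directly on the K3 resolution: the computation $(2H-\bar{C}_2)^2=-2$, $(2H-\bar{C}_2)\cdot H=2$ together with Riemann--Roch makes $2H-\bar{C}_2=\tfrac12(H+E_{11}+\dots+E_{16})$ effective, the descent through $\pi_*\OO_{\tilde K}(2H)=\OO_K(2)$ and the surjection from $H^0(\P^3,\OO(2))$ is legitimate (and the closure definition of the scheme structure of $C_2$ is exactly what lets you pass from ``$Q$ contains $C_2$ off the nodes'' to ``$Q\in H^0(\II_{C_2}(2))$''), and the final identification $C_2=Q\cap\Delta_2$ via equal degree plus the absence of finite-length subsheaves of a Cohen--Macaulay pure one-dimensional $\OO_Z$ is clean. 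What each approach buys: the paper's gin machinery is needed anyway to rule out the genus-$5$ case in Lemma~\ref{lCurvesBreakUp}, so the lemma becomes a free corollary, and Proposition~\ref{pCompleteIntersection} applies to an arbitrary degree-$6$, genus-$4$ scheme on an irreducible cubic with no reference to $K$; your argument is independent of the Appendix, exploits the lattice data already computed, and makes the quadric structurally visible (its residual class $\tfrac12(H+E_{11}+\dots+E_{16})$), while correctly explaining why the same mechanism fails for $\bar{C}_1$ --- but it does lean on the normality of $\Delta_2$ and would not by itself recover the general statement of Theorem~\ref{tSexticCurves}. Two small caveats: your ``alternative'' route leaves $h^1(\OO_{C_2}(2))=0$ unverified, and the inference from $\bar{C}_2\cdot E_i=1$ to smoothness of $C_2$ at the nodes deserves a sentence (it holds because a Weil divisor germ at an $A_1$ point whose strict transform meets the $(-2)$-curve transversally in a single reduced point is a smooth branch, as for a ruling line on a quadric cone); neither is needed for your main line of argument.
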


\begin{proof}
By Lemma \ref{lCurvesBreakUp}, we know $C_2$ is a sextic of arithmetic genus at least $4$ (since $C_2$ is pure dimensional by definition of its scheme structure and receives a surjection from $\bar{C}_2$, which has arithmetic genus $4$). Moreover, $C_2$ lies on the irreducible cubic $\Delta_2$. By Theorem \ref{tSexticCurves}, $C_2$ cannot have arithmetic genus $\ge 5$, so $p_a (C_2) =4$ and Proposition \ref{pCompleteIntersection} implies that it is a complete intersection of $\Delta_2$ and a quadric. 
\end{proof}

We will now consider the cubic $S:= \Delta_2$ that is contact to the Kummer surface $K$ in the complete intersection curve $C_2$ of type $(2,3)$. 

\begin{lemma}\label{lContact}
If an irreducible normal cubic surface $S$ is contact to a  Kummer quartic surface $K$ in a complete intersection curve of type $(2,3)$, then the equation of $K$ can be written as 
\[
	\det \begin{pmatrix} a & b \\ b & c \end{pmatrix} = 0
\]	
where $c=0$ is the equation of $S$, $b$ is quadratic and $b=c=0$ defines the contact curve, $a$ is a linear form.
\end{lemma}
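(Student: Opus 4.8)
The plan is to translate the even-contact hypothesis into the single algebraic statement that the restriction of the quartic equation to $S$ is, up to a scalar, the square of the equation cutting out the contact curve, and then to divide out the cubic. Write $k$ for a defining quartic form of $K$ and $c$ for a defining cubic form of $S$. Since the contact curve $C_2$ is a complete intersection of type $(2,3)$ lying on $S$, there is a quadratic form $b$, unique up to scalar, with $C_2 = \{b = c = 0\}$; in particular $\{b=0\}$ cuts out $C_2$ as an effective Cartier divisor on the integral surface $S$. The desired conclusion $k = ac - b^2$ with $a$ linear is equivalent to the divisibility $c \mid (k + b^2)$ in the polynomial ring $\C[x_0,\dots,x_3]$, since the quotient then automatically has degree $4-3=1$; and as $c$ is irreducible in this UFD, divisibility by $c$ is equivalent to the vanishing of $k + b^2$ on $S$, i.e. to the identity $k|_S = -\,b^2|_S$ in $H^0(S,\OO_S(4))$.

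So the heart of the matter is to establish this last identity, and the first move is to compare the two sections $k|_S$ and $b^2|_S$ of the line bundle $\OO_S(4)$ at the level of their zero divisors. The divisor of $b^2|_S$ is $2C_2$ by construction. The divisor of $k|_S$ is the scheme $K\cap S$; by the even-contact hypothesis (Definition \ref{dContact} and Lemma \ref{lContactSeparately}) its support is $C_2$ and it occurs with even multiplicity, while the degree bookkeeping $\deg(K\cap S)=\deg K\cdot\deg S = 12 = 2\cdot 6 = 2\deg C_2$ forces the multiplicity to be exactly $2$ and leaves no room for any further component. Hence $k|_S$ and $b^2|_S$ have one and the same associated Weil divisor $2C_2$ on $S$.

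The crucial step, and the one place where the normality of $S$ is used, is to upgrade this equality of associated cycles to an equality of sections up to a scalar. On a normal integral variety the map from Cartier divisors to Weil divisors is injective; equivalently, a rational function whose divisor is trivial in codimension one is a global unit. Applying this to $k|_S/b^2|_S$, whose divisor vanishes by the previous paragraph, I conclude it is a nonzero constant $\lambda$, i.e. $k|_S = \lambda\, b^2|_S$. Over $\C$ one may rescale $b$ by $\sqrt{-\lambda}$ so that $k|_S = -\,b^2|_S$, which is precisely the vanishing of $k+b^2$ on $S$. Setting $a := (k+b^2)/c$, a linear form, yields $k = ac - b^2 = \det\begin{pmatrix} a & b \\ b & c\end{pmatrix}$, with $b=c=0$ defining the contact curve, as claimed.

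I expect the normality step to be the main subtlety: it is exactly what prevents $k|_S$ from cutting out $2C_2$ together with an invisible non-reduced contribution that would be seen by the Cartier structure but not by the cycle, and it is essential that $S$, being a del Pezzo cubic symmetroid with only rational double points, is normal. A secondary point to check carefully is that $\{b=0\}$ does restrict to the Cartier divisor $C_2$ on $S$ and that $H^0(S,\OO_S(n))$ is computed by forms of degree $n$ modulo $(c)$; both follow from $S$ being a hypersurface in $\P^3$ together with the vanishing of the relevant line-bundle cohomology on $\P^3$.
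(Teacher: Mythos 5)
Your proof is correct and follows essentially the same route as the paper's: one compares the Weil divisors cut out by $k$ and $b^2$ on $S$, uses normality of $S$ (injectivity of Cartier into Weil divisors) to promote the equality of divisors to an equality of sections up to a nonzero constant, and then deduces divisibility of $k+b^2$ by $c$ in the polynomial ring. The only cosmetic differences are that you spell out the degree bookkeeping forcing the contact multiplicity to be exactly $2$ and that you absorb the scalar into $b$ rather than into $k$; both are immaterial.
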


\begin{proof}
If $c$ is the equation of the cubic $S$, then by hypothesis, the contact curve is defined by $c=b=0$ with $b$ quadratic. Hence, $b^2$ and the equation $k$ of $K$ cut out the same divisor on $S$, thus, since $S$ is normal and Cartier divisors inject into Weil divisors, the function $b^2/k$, defining a trivial Weil divisor, is a nonvanishing constant on $S$. Absorbing this constant into $k$, we get that $b^2-k$ is divisible by $c$, whence the assertion.
\end{proof}

Note that in the situation of Lemma \ref{lContact}, $a=0$ defines a trope on $K$. 

\begin{proposition}\label{pLinesInTropes}
Let
\[
	\det \begin{pmatrix} a & b \\ b & c \end{pmatrix} = 0
\]	
be the equation of a Kummer surface $K$ where $c=0$ the equation of an irreducible normal contact cubic $S$
to $K$ and $a=0$ is the equation of a contact plane to $S$.

Consider a second plane $\P^2 \subset \P^3$ that is also contact to $K$. If this $\P^2$
is different from $a=0$ then $\P^2 \cap S$ contains a line.
\end{proposition}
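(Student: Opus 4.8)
The plan is to translate the contact hypothesis into a polynomial factorization on the second plane and then conclude by unique factorization. First I would fix the second contact plane as $\Pi = \{a'=0\}$ for a linear form $a'$ that is not proportional to $a$, and restrict everything to $\Pi \cong \P^2$. Writing $\bar a, \bar b, \bar c$ for the restrictions of $a,b,c$ to $\Pi$, these are forms of degrees $1,2,3$ respectively, and since $\Pi$ is different from $\{a=0\}$ the linear form $\bar a$ is nonzero (hence prime in the homogeneous coordinate ring of $\Pi$). The whole point is that the intersection $\Pi\cap S = \{\bar c = 0\}$ is a plane cubic, so "contains a line" means exactly that $\bar c$ acquires a linear factor.

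Next I would make the contact condition explicit. A plane contact to the irreducible quartic $K$ cuts out $2T$ for a conic $T$, so the restriction of the defining quartic $ac-b^2$ to $\Pi$ is a perfect square of a quadratic form: $\bar a\,\bar c - \bar b^2 = Q^2$ for some quadric $Q$ on $\Pi$ (any scalar can be absorbed into $Q$ over $\C$). Rearranging and factoring over $\C$ gives
\[
\bar a\,\bar c \;=\; \bar b^2 + Q^2 \;=\; (\bar b + iQ)(\bar b - iQ),
\]
a product of two quadratic forms on $\Pi$.

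The key step is then unique factorization in $\C[x_0,x_1,x_2]$: the prime linear form $\bar a$ divides the product $(\bar b+iQ)(\bar b-iQ)$, hence divides one of the factors, say $\bar b + iQ = \bar a\,\ell$ with $\ell$ linear. Cancelling the nonzero $\bar a$ yields $\bar c = \ell\,(\bar b - iQ)$, which exhibits the cubic $\Pi\cap S = \{\bar c = 0\}$ as containing the line $\{\ell = 0\}$. I would record the small nondegeneracy check that $\ell\not\equiv 0$: if $\ell \equiv 0$ then $\bar a\,\bar c\equiv 0$, and since $\bar a\neq 0$ this forces $\bar c\equiv 0$, i.e. $\Pi\subset S$, contradicting the fact that the irreducible cubic surface $S$ contains no plane.

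I do not expect a serious obstacle here: the only things requiring care are identifying the contact condition with the perfect-square identity $\bar a\,\bar c - \bar b^2 = Q^2$ and tracking the two nondegeneracy points ($\bar a\neq 0$ from $\Pi\neq\{a=0\}$, and $\ell\neq 0$ from irreducibility of $S$). Once these are in place the conclusion is a one-line unique-factorization argument with essentially no computation.
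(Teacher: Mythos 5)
Your proof is correct and follows essentially the same route as the paper: restrict to the second contact plane, use the contact condition to write $\bar a\,\bar c-\bar b^2$ as a square, factor $\bar a\,\bar c$ as a product of two quadratic forms, and conclude by unique factorization in the coordinate ring of the plane that $\bar c$ acquires a linear factor. The only (cosmetic) difference is that the paper phrases the last step as a comparison of the possible degree partitions of the irreducible factors of $\bar a\,\bar c$, whereas you divide out the prime $\bar a$ directly; your explicit checks that $\bar a\neq 0$ and $\ell\neq 0$ match the paper's observation that $a$ and $c$ restrict to nonzero forms on the plane.
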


\begin{proof}
$\P^2 \cap K$ is a double conic. Therefore on this $\P^2$ we can write
\[
	\begin{pmatrix} a & b \\ b & c \end{pmatrix} = -d^2.
\]
where we denote by $a,b,c$ also their restrictions to $\P^2$ and by $d$ an appropriate equation of the
tangent conic.

We then have
\[
	ac -b^2 = -d^2 \iff  ac =  b^2-d^2 = (b +d) (b -d).
\]
Now $a$ and $c$ are nonzero on $\P^2$: $a$ by assumption and $c$ since it defines an irreducible contact cubic.
Since the equation above is in a factorial ring the degrees of the irreducible factors of $ac$ in the coordinate ring of $\P^2$
must be a subpartition of $\{3,1\}$ (for the left side) and $\{2,2\}$ for the right side, i.e either $\{2,1,1\}$ or $\{1,1,1,1\}$. In any case $c$ can not
be irreducible on $\P^2$ and contains at least one linear factor. 
\end{proof}

\begin{theorem}\label{tExclusion}
There is no tame degeneration of Gushel-Mukai fourfolds, which means in particular a degeneration where  $\Delta_1, \Delta_2$ 
are del Pezzo cubic symmetroids and $K$ is a Kummer surface in $\P^3$. 
\end{theorem}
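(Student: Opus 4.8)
The plan is to derive a contradiction by counting lines on the contact cubic $\Delta_2$, which, being a del Pezzo cubic symmetroid, carries at most $9$ of them by Remark \ref{rLinesOnAdmissible}. First I would invoke Lemma \ref{lDescriptionCurves}: in a tame degeneration the component $C_2$ of the contact curve is a complete intersection of type $(2,3)$ on the irreducible normal cubic $\Delta_2$, so Lemma \ref{lContact} lets me write the equation of $K$ as $ac-b^2=0$, with $c$ the equation of $\Delta_2$, $b$ the quadric cutting out $C_2=\{b=c=0\}$, and $a$ a linear form whose zero plane $\Pi_0=\{a=0\}$ is a trope plane of $K$. Note that this will use only the component $\Delta_2$; the hypothesis that $\Delta_1$ is also a symmetroid has already been spent in producing the $(2,3)$ complete intersection.

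Next I would pin down exactly which nodes of $K$ lie on $\Delta_2$. Restricting the equation of $K$ to $\Delta_2=\{c=0\}$ yields $-b^2$, so set-theoretically $\Delta_2\cap K=C_2$ and every node of $K$ on $\Delta_2$ lies on $C_2$; conversely the six points $\{a=b=c=0\}$ are singular points of $K$ lying on $C_2$. Thus $\Delta_2$ contains exactly the six nodes $E_{11},\dots,E_{16}$ of Lemma \ref{lCurvesBreakUp}, and these are precisely the six nodes carried by the trope plane $\Pi_0$.

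The heart of the argument is a count using the $(16,6)$ configuration of Remark \ref{rTropesIncidence}. Besides $\Pi_0$ there are $15$ trope planes; because any two trope planes meet in exactly two nodes and every pair of nodes lies on exactly two trope planes, sending each plane $\Pi\neq\Pi_0$ to the pair of special nodes it shares with $\Pi_0$ gives a bijection onto the $15$ pairs among $E_{11},\dots,E_{16}$. By Proposition \ref{pLinesInTropes} each such $\Pi$ cuts $\Delta_2$ in a plane cubic containing a line $\ell_\Pi\subset\Delta_2$. I would then show these $15$ lines are pairwise distinct: if $\ell_\Pi=\ell_{\Pi'}$ for $\Pi\neq\Pi'$, the common line would equal $\Pi\cap\Pi'$ and hence carry the two nodes shared by $\Pi$ and $\Pi'$; lying on $\Delta_2$, both of these would have to be special, forcing the two distinguished pairs, and so $\Pi$ and $\Pi'$, to coincide. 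Hence $\Delta_2$ contains at least $15$ lines, contradicting the bound $9$, and no tame degeneration exists.

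The main obstacle is this final distinctness step. It is exactly here that the non-degeneracy of the Kummer configuration together with the precise identification of the six nodes on $\Delta_2$ are indispensable: only because $\Delta_2$ meets $K$ in the doubled curve $2C_2$, so that its sole Kummer nodes are the six special ones, can a coincidence of two lines be forced through two special nodes and thereby excluded. I would also want the minor checks that $\{a=b=c=0\}$ really consists of six distinct nodes (as expected for six of the sixteen nodes of $K$) and that the $15$ lines genuinely lie on $\Delta_2$ rather than merely in the trope planes, but these are routine once the configuration is in place.
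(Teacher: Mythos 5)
Your proposal is correct, and it uses the same ingredients as the paper's proof — Lemmas \ref{lDescriptionCurves} and \ref{lContact} to put $K$ in the form $ac-b^2=0$, Proposition \ref{pLinesInTropes} applied to the $15$ trope planes other than $\Pi_0=\{a=0\}$, the non-degeneracy of the $(16,6)$ configuration from Remark \ref{rTropesIncidence}, and the line counts of Remark \ref{rLinesOnAdmissible} — but your endgame is genuinely different and cleaner. The paper first applies the pigeonhole principle to the $15$ tropes versus the $9$, $5$ or $2$ lines to reduce to the Cayley cubic, and then kills that remaining case by showing that at least six of its lines would force $\Delta_2$ to pass through a seventh node of $K$, contradicting $\bar{C}_2\equiv\frac{1}{2}(3H-E_{11}-\dots-E_{16})$. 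You instead prove the single sharper statement that the $15$ lines $\ell_\Pi$ are pairwise distinct, which contradicts the bound of $9$ in all three cases at once and eliminates the case distinction. The crucial input for your distinctness step — that the only nodes of $K$ on $\Delta_2$ are the six special ones on $\Pi_0$, because $K\cap\Delta_2$ is set-theoretically $C_2$ and $C_2$ meets only $E_{11},\dots,E_{16}$ — is exactly the node-counting fact the paper uses in its Cayley-cubic step, so your argument can be viewed as promoting that observation to the main engine of the proof. Your identification of the nodes on $\Delta_2$ with the nodes on the trope plane $\Pi_0$, and the injectivity of $\Pi\mapsto(\text{pair of nodes shared with }\Pi_0)$, both check out against Remark \ref{rTropesIncidence} (every pair of nodes lies on exactly two trope planes), and the "minor checks" you flag are indeed routine.
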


\begin{proof}
First of all, by Lemmata \ref{lContactK}, \ref{lCurvesType}, \ref{lDescriptionCurves}, Lemma \ref{lContact} and hence Proposition \ref{pLinesInTropes} is applicable. Here $S=\Delta_2$, the cubic contact to $K$ in the genus $4$ sextic curve $C_2$. Suppose this was a del Pezzo cubic symmetroid. It has at $9,5$ or $2$ lines on it by Remark \ref{rLinesOnAdmissible}. Denote the trope defined by $a=0$ in the notation of Proposition \ref{pLinesInTropes} by $T$. Then $K$ admits $15$ tropes different from $T$, and each contains one of the lines of $S$. By the pigeonhole principle, in the cases of $5$ or $2$ lines, we would get a configuration of three tropes passing through a line, contradicting Remark \ref{rTropesIncidence}. Hence the number of lines must be $9$ and $S$ must be a Cayley cubic if it is to work at all. 

But then there are at least six lines on $S$ that are contained in $2$ of the $15$ tropes other than $T$. Each such line must pass through $2$ nodes of the Kummer surface by Remark \ref{rTropesIncidence}, of which at most one can lie on $T$, since there are no three tropes through a line, again by Remark \ref{rTropesIncidence}. Since $S$ also passes through all $6$ nodes on $T$, we see that $S$ passes through at least $7$ nodes and the class of the strict transform $\bar{C}_2$ of the contact curve can not be
\[
	\frac{1}{2}(3H - E_{11} - \dots E_{16}).
\]
So the case of a Cayley cubic is impossible, too.
\end{proof}

\begin{remark}\label{rMoreGeneral}
In computer experiments using Macaulay2, we did not find any examples of admissible degenerations of Gushel-Mukai fourfolds either (when one allows $K$ to be a null-correlation quartic with worse singularities than just $16$ nodes). Of course that is not a proof that they cannot exist. One might also allow the ambient bundles to degenerate to some sheaves.
\end{remark}

\appendix

\section{Sextic curves of arithmetic genus $\ge 5$ in $\P^3$}\label{aBothmer}

%\begin{theorem}\label{tBothmer}
%Suppose that $C$ is a one-dimensional subscheme of $\P^3$ without embedded components, but not necessarily reduced or irreducible. We will refer to this as a curve below. Then if the arithmetic genus $p_a (C) \ge 5$, then $C$ is not connected. 
%\end{theorem}

%\begin{proof}
%...
%\end{proof}

The aim of this section is to prove the following 

\begin{theorem}\label{tSexticCurves}
Let $C \subset \P^3$ be a scheme of dimension $1$, degree $6$ and arithmetic genus at least $5$. Then
$C$ can not lie on an irreducible cubic surface.
\end{theorem}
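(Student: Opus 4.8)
The plan is to realize $C$ as an effective (Weil) divisor on the cubic surface $S=\{f=0\}$ and to bound its arithmetic genus by adjunction together with the Hodge index theorem: the guiding principle is that a degree-six divisor on a cubic has self-intersection at most $12$, hence arithmetic genus at most $4$, with equality only when it is cut out by a quadric.

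First I would record two easy reductions. Since an irreducible cubic contains no plane, a degree-six curve lying in a plane $\Pi$ would have to be contained in the plane cubic $\Pi\cap S$, which is impossible; hence $C$ is nondegenerate. Next, $S$ is a hypersurface, so it is Gorenstein with $\omega_S=\OO_S(-1)$ regardless of its singularities, and $\chi(\OO_S)=1$; this makes adjunction available in the form $p_a(C)=1+\tfrac12\deg\big(C\cdot(C+K_S)\big)=\tfrac12\,C^2-2$ whenever $C$ is a Cartier divisor on $S$ meeting the hyperplane class $H$ in degree $6$.

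The model case is a smooth cubic. There $\Pic(S)$ carries an intersection form of signature $(1,6)$ with $H^2=3$ and $K_S=-H$; writing $C=2H+R$ with $R\cdot H=0$ gives $C^2=12+R^2\le 12$ by Hodge index, so $p_a(C)\le 4$, with equality exactly when $C\equiv 2H$, i.e. when $C$ is the complete intersection of $S$ with a quadric. Thus $p_a(C)\ge 5$ is flatly excluded in this case, which already suggests the general shape of the result.

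The real work is the singular irreducible cubics, and this is where I expect the main obstacle. I would use the classification: either $S$ is normal, with rational double points or the elliptic cone over a smooth plane cubic, or $S$ is non-normal and singular along a line. For a normal cubic with rational double points I would pass to the minimal (crepant) resolution $\pi\colon\tilde S\to S$, a weak del Pezzo surface of degree $3$ with $-K_{\tilde S}=\pi^*H$ nef and big and $(-K_{\tilde S})^2=3$, and run the same Hodge-index estimate on $\tilde S$. The subtlety is that $C$ need not be Cartier on $S$: where $C$ passes through a singular point it is only a Weil divisor, the naive adjunction formula acquires local correction terms, and the arithmetic genus of $C$ differs from that of its strict transform $\tilde C$ (exactly the phenomenon quantified in Lemma \ref{lArithmeticGenusDrop}). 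Since the passage to $\tilde C$ lowers the genus, that inequality goes the \emph{wrong} way for an upper bound, so the crux is to show that the local contributions at the singular points can only push $p_a(C)$ down relative to the value computed on $\tilde S$; I would establish this by a local analysis at each singular point, comparing $\OO_C$ with the pushforward from the resolution. The elliptic cone and the non-normal cubics I would treat separately and more directly, decomposing $C$ into its components along the rulings or along the singular line and bounding each contribution by hand, reusing the quadric-cone computation of Lemma \ref{lArithmeticGenusDrop} for multiple structures supported on a ruling. Assembling the cases then yields $p_a(C)\le 4$ in all situations, contradicting the hypothesis $p_a(C)\ge 5$.
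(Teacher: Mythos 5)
Your strategy is reasonable in outline, and your treatment of the smooth case is correct, but the proof has a genuine gap exactly at the point you flag as ``the crux'': you never establish that the singular points of $S$ can only push $p_a(C)$ down relative to the Hodge-index bound computed on the resolution $\tilde S$. As you yourself note, the inequality of Lemma \ref{lArithmeticGenusDrop} goes the wrong way: from $0\to\OO_C\to\pi_*\OO_{\bar C}\to\FF\to 0$ one gets $p_a(C)=p_a(\bar C)+\mathrm{length}(\FF)$, so the Hodge-index estimate $p_a(\bar C)\le 4$ on the weak del Pezzo $\tilde S$ says nothing about $p_a(C)$ unless you prove the quantitative refinement $\bar C^2\le 12-2\,\mathrm{length}(\FF)$, i.e.\ that every unit of length of $\FF$ concentrated over a rational double point is paid for by a corresponding drop in $\bar C^2$ forced by the intersection numbers $\bar C\cdot E_i$. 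That refinement is the entire content of the theorem in the singular normal case; it requires a genuine local analysis of non-reduced, non-Cartier curve germs at the $ADE$ points (the scheme $C$ is not determined by its Weil divisor class there, so neither $p_a(C)$ nor $\mathrm{length}(\FF)$ is a function of the class of $\bar C$ alone), and it is asserted rather than proved. The same applies to the two cases you defer entirely: the cone over an elliptic curve, whose minimal resolution is not a weak del Pezzo, so the $(-K)$-Hodge-index argument does not apply as stated, and the non-normal cubics singular along a line, where Weil divisors and adjunction behave still worse. Since the statement must cover arbitrary one-dimensional subschemes --- in particular multiple structures supported on lines through the singular locus, which is exactly how it is used in Lemma \ref{lCurvesBreakUp} --- these are not edge cases but the heart of the matter.

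For comparison, the paper's proof avoids this entirely by working with generic initial ideals: the diagram of a generic plane section of $C$ admits only four shapes compatible with Proposition \ref{pSpaceCurveDiagram}, two of which force $p_a(C)\le 4$ outright, while the other two force, via the Ellia--Peskine secant-line theorem (Proposition \ref{pSecantLine}), a two-dimensional family of $5$- or $6$-secant lines that would all have to lie on $S$, which is impossible for an irreducible cubic. That argument treats all scheme structures and all singularity types of $S$ uniformly; irreducibility of $S$ enters only through the elementary fact that it carries no two-dimensional family of lines. Your smooth-case computation is a nice consistency check (it recovers the equality case of Proposition \ref{pCompleteIntersection}, since $p_a=4$ forces $C\in|2H|$), but to turn your approach into a proof you would have to carry out the deferred local analysis in full.
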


To prove this theorem we use the theory of generic initial ideals introduced by Mark Green \cite{Green98}.
It reduces the question above to the combinatorics of certain diagrams. 

\begin{notation} Let $f \colon \N^2 \to \N \cup \{\infty\}$ be a map. We depict $f$ in a diagram as follows:

\begin{center}
\begin{tabular}{ccccccccc}
\\
	&&& f(0,0) \\
	&& f(1,0) && f(0,1) \\
	& f(2,0) && f(1,1) && f(0,2) \\
	\dots && \dots && \dots && \dots  \\
	\\
\end{tabular}
\end{center}

Furthermore, $f(i,j)$ is usually replaced by $\bullet$ if $f(i,j) = 0$ and $f(i,j)$ is replaced by $\circ$ if $f(i,j) = \infty$, and we adhere to this practice.

In this situation we also set
\[
  \lambda_i := \min \{ j \,|\, f(i,j) \not= \circ \}.
\]
%Notice that the $\lambda_i$ are just the number of $\circ$'s in the down-right diagonals. See \cite[the discussion after the Example after Lemma 4.2.]{Green98}.
\end{notation}

%\begin{remark}
%Notice that the $\lambda_i$ are just the number of $\circ$'s in the down-right diagonals. See \cite[the discussion after the Example after Lemma 4.2.]{Green98}.
%For $C$ and $\Gamma$ as in Example \ref{eDeg5g2} we have $\lambda_0 = 4$ and $\lambda_1 = 1$. All other $\lambda_i$ are zero.
%\end{remark}

\begin{remark} \label{rGin}
If $C \subset \P^3$ is a $1$-dimensional scheme, then Mark Green
associates to $C$ a function $f_C$ and a diagram $\Delta(C)$ as above, in the following
way: Let 
\[
\gin(I_C) \subset \C[x_0,x_1,x_2,x_3]
\]
 be the generic initial ideal with respect to the reverse lexicographic order 
as in \cite[Remark after Theorem 1.27]{Green98}, then following \cite[Definition 4.18]{Green98} 
\[
	f_C(i,j) := \min \{k \,|\, x_0^ix_1^jx_2^k \in \gin(I_C) \}.
\]
Similarly Green 
also associates such a function $f_{\Gamma}$ and diagram $\Delta(\Gamma)$ to any $0$-dimensional scheme $\Gamma \subset \P^2$ \cite[Discussion following the Example after Lemma 4.2]{Green98}. Here only $\circ$ and $\bullet$ can occur.
% \cite[Remark after Theorem 1.27]{Green98}.
\end{remark}

\begin{remark} 
Mark Green assumes $C$ irreducible, reduced and non degenerate before Definition 4.18 of \cite{Green98}, but this
assumption is not used until Example 4.21. In this Appendix we use only facts proved before that.
\end{remark}

The diagrams of $C \subset \P^3$ and its generic hyperplane section  are related
in a nice way:

\begin{proposition} \label{pHyperplaneSection}
Let $C \subset \P^3$ be a one dimensional scheme as above and let $\Gamma \subset \P^2$ be a generic hyperplane section of $C$. Then $\Delta(\Gamma)$ is
obtained from $\Delta(C)$ by replacing all entries other than $\bullet$ and $\circ$ (i.e., all numerical entries) with $\bullet$'s.
\end{proposition}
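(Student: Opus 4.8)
The plan is to read both diagrams off the single strongly stable monomial ideal $M := \gin(I_C) \subset \C[x_0,x_1,x_2,x_3]$ and to track what passing to a generic hyperplane section does to it. Throughout I use that $M$ is Borel-fixed, so it is an honest ideal (closed under multiplication by each variable) and, for the reverse lexicographic order, is compatible with generic hyperplane sections in the standard way recalled in \cite{Green98} before Example 4.21. Taking $H=\{x_3=0\}$ (the generic change of coordinates being already built into the gin), the generic hyperplane section is $\Gamma = C\cap H \subset \P^2 = \P(x_0,x_1,x_2)$.

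First I would translate both diagrams into monomial membership. By definition $f_C(i,j)<\infty$ exactly when $x_0^i x_1^j x_2^k \in M$ for some $k$, and since $M$ is an ideal this is equivalent to $x_0^i x_1^j x_2^k \in M$ for all $k\gg 0$. Thus a position carries a $\circ$ iff $x_0^i x_1^j x_2^k \notin M$ for every $k$, carries a $\bullet$ iff $x_0^i x_1^j \in M$, and carries a numerical entry iff $x_0^i x_1^j \notin M$ but $x_0^i x_1^j x_2^k \in M$ for some $k\ge 1$.

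Next I would identify $\gin(I_\Gamma)$. The revlex hyperplane-section property gives that $\gin(I_\Gamma)$ is the image $M|_{x_3=0}\subset \C[x_0,x_1,x_2]$ of $M$ under setting $x_3=0$, followed by saturation with respect to $x_2$ (this last step is the passage to the saturated ideal of the $0$-dimensional scheme $\Gamma$, and is why only $\circ$ and $\bullet$ occur). Since a monomial not involving $x_3$ lies in $M|_{x_3=0}$ iff it already lies in $M$, the saturation $N := (M|_{x_3=0}):x_2^\infty$ satisfies: $x_0^i x_1^j x_2^c \in N$ iff $x_0^i x_1^j x_2^k \in M$ for some $k$. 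Here strong stability is used twice. It forces $N$ to be extended from $\C[x_0,x_1]$, say $N = J\cdot \C[x_0,x_1,x_2]$ with $J\subset \C[x_0,x_1]$, so that $x_2$ is a nonzerodivisor on the quotient and $f_\Gamma$ takes only the values $0$ and $\infty$; and it gives that $x_0^i x_1^j x_2^k \in M$ for some $k$ is equivalent to $x_0^i x_1^j \in J$.

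Combining the two, $f_\Gamma(i,j)=0$ (a $\bullet$) precisely when $x_0^i x_1^j \in J$, that is precisely when $f_C(i,j)<\infty$, while $f_\Gamma(i,j)=\infty$ (a $\circ$) precisely when $f_C(i,j)=\infty$. This is exactly the assertion that $\Delta(\Gamma)$ arises from $\Delta(C)$ by turning every numerical entry into a $\bullet$ and leaving the $\bullet$'s and $\circ$'s untouched. The one point requiring care, and the main obstacle, is the precise form of the revlex hyperplane-section statement: that taking a generic hyperplane section and then passing to the saturated ideal of the resulting $0$-dimensional scheme amounts combinatorially to setting $x_3=0$ in $M$ and saturating with respect to $x_2$. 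Once this is in hand, the remainder is the elementary strong-stability bookkeeping above.
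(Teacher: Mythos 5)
Your argument is correct, and it takes essentially the same route as the paper: the paper's entire proof of this proposition is the citation to the sentence before Corollary 4.20 in \cite{Green98}, and what you have written is the standard unpacking of that reference, namely the Bayer--Stillman compatibility of reverse-lexicographic generic initial ideals with generic hyperplane sections and with saturation by the last variable, followed by the monomial bookkeeping that turns each finite value $f_C(i,j)<\infty$ into $f_\Gamma(i,j)=0$. The only input you do not prove is the hyperplane-section/saturation property itself, which you correctly identify as the crux and which is exactly the content of the cited passage of Green.
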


\begin{proof}
\cite[the sentence before Corollary 4.20]{Green98}
\end{proof}

\begin{proposition} \label{pSpaceCurveDiagram}
Let $C \subset \P^3$ be a $1$-dimensional scheme. Then $f_C$ and $\Delta(C)$ have the following
properties:

\begin{enumerate}
%\item \label{iWhatGreenReallySays}
%If $f_C (i_1, i_2) < \infty$, then 
%\[
%f_C (i_1 , i_2) > f_C (i_1, i_2 +1) \ge f_C (i_1 +1 , i_2).
%\]
%{\color{red} this is what Green writes but it is incomplete and apparently not always true if $f_C (i_1, i_2)=0$?}
\item \label{iBorelFixed1} Reading every horizontal row from left to right, the entries are weakly increasing from one to the next. %{\color{red} check if this is true; in this form still unproven and I could not find a reference}
\item \label{iBorelFixed2} The entries in each north-west to south-east diagonal row, read from left to right, as well as the entires in each north-east to south-west diagonal row, read from right to left, are weakly decreasing from one to the next, and 
decrease strongly whenever the respective entry is not equal to $\bullet$ or $\circ$. %{\color{red} check if this is true; in this form still unproven and I could not find a reference}
\item \label{iDegree} The degree of $C$ is the number of $\circ$'s.
\item \label{iArithmeticGenus} The arithmetic genus $p_a$  of $C$ is obtained by adding 1 for each $\circ$ in the third row, 2 for each $\circ$ in the
fourth row, etc. (ignore the first and second rows), and then subtracting the numbers (entries different from $\bullet$ and $\circ$) in the diagram.
\item \label{iHypersurface} If $f(i,j) = k < \infty$ then $C$ lies on a (not necessarily irreducible) hypersurface of degree $i+j+k$. 
\end{enumerate}
\end{proposition}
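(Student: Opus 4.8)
The plan is to deduce all five properties from two structural facts about the reverse-lexicographic generic initial ideal $J := \gin(I_C)$, both going back to Galligo and Bayer--Stillman and recorded in Green's notes \cite{Green98}. First, $J$ is Borel-fixed, hence, in characteristic zero, \emph{strongly stable}: if a monomial $m\in J$ is divisible by $x_\ell$, then $x_s\,m/x_\ell\in J$ for every $s<\ell$. Second, taking $I_C$ to be the saturated ideal, $J$ is again saturated and carries no minimal generator involving the last variable $x_3$ (the Bayer--Stillman saturation criterion for the reverse-lex order), so that $x_0^ax_1^bx_2^cx_3^e\in J$ if and only if $x_0^ax_1^bx_2^c\in J$. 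I will treat (a), (b) as purely combinatorial consequences of strong stability, (c), (d) as a Hilbert-function count made legitimate by the absence of $x_3$, and (e) as a restatement of membership in an initial ideal.

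For (a) and (b) I would translate each claimed monotonicity into one instance of the exchange rule. If $f(i,j)=k$ with $j>0$, then $x_0^ix_1^jx_2^k\in J$ is divisible by $x_1$, so $x_0^{i+1}x_1^{j-1}x_2^k\in J$ and $f(i+1,j-1)\le f(i,j)$; reading a horizontal row $i+j=\mathrm{const}$ from left to right is exactly the direction of decreasing $i$, which is the weak increase claimed in (a). For (b), along a fixed-$i$ (north-west to south-east) diagonal, if $0<f(i,j)=k<\infty$ then $x_0^ix_1^jx_2^k$ is divisible by $x_2$, so $x_0^ix_1^{j+1}x_2^{k-1}\in J$ and $f(i,j+1)\le k-1$, a strict drop; the fixed-$j$ diagonal is handled identically by exchanging $x_2$ for $x_0$. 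The weak statements, including the behaviour at $\bullet$'s, follow since $J$ is an ideal. A byproduct I would record is that the set of $\circ$'s (columns with $f(i,j)=\infty$) is closed under decreasing either coordinate, hence is a Young-diagram-shaped region anchored at $(0,0)$; as a one-dimensional scheme has positive degree, there is at least one $\circ$, so $(0,0)$ itself is a $\circ$.

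For (c) and (d) I would compute $H(t)=\dim(S/J)_t=\dim(S/I_C)_t$ by counting standard monomials. Because $J$ has no $x_3$, the standard monomials are exactly $x_0^ax_1^bx_2^cx_3^e$ with $x_0^ax_1^bx_2^c$ standard and $e\ge0$ arbitrary, so $H(t)=\#\{x_0^ax_1^bx_2^c\ \text{standard},\ a+b+c\le t\}$. Each $\circ$-column $(i,j)$ contributes $t-i-j+1$ monomials once $t\ge i+j$, while each finite column contributes exactly $f(i,j)$ for $t$ large. Summing, $H(t)=(\#\circ)\,t+\big(\#\circ-\sum_{\circ}(i+j)\big)+\sum_{0<f<\infty}f(i,j)$ for $t\gg0$. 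Comparing with the Hilbert polynomial $dt+(1-p_a)$ gives $d=\#\circ$, which is (c), and $p_a=1+\sum_{\circ}(i+j-1)-\sum_{0<f<\infty}f(i,j)$. The unique $\circ$ at $(0,0)$ contributes $i+j-1=-1$, cancelling the $+1$, while $\circ$'s in row one contribute $i+j-1=0$; this is precisely the prescription of (d) — weight $r-1$ for a $\circ$ in row $r$, ignoring rows zero and one, minus the numerical entries.

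Finally, (e) is immediate: if $f(i,j)=k<\infty$ then $x_0^ix_1^jx_2^k$ lies in $J=\initialTerm(\phi(I_C))$ for the generic coordinate change $\phi$ defining the gin, hence is the leading monomial of a nonzero form $g\in\phi(I_C)$ of degree $i+j+k$, and $C$ lies on the degree-$(i+j+k)$ hypersurface $\phi^{-1}(g)=0$. The step I expect to require the most care is the bookkeeping in (c)--(d): one must justify rigorously that $J$ carries no $x_3$ (which is where saturatedness of $I_C$ enters) so that the three-variable count is valid, and one must pin down the shape of the $\circ$-region tightly enough to know that $(0,0)$ is the only $\circ$ in row zero, since it is exactly this term whose cancellation against the constant $+1$ of the Hilbert polynomial makes the genus formula come out integrally and match the stated row-weights.
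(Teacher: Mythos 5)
Your proposal is correct, and for parts (a), (b) and (e) it follows essentially the same path as the paper: (a) and (b) are deduced from the elementary exchange moves characterizing Borel-fixed (strongly stable) ideals, and (e) by unwinding the definition of the generic initial ideal to produce an honest form of degree $i+j+k$ in the (transformed) ideal of $C$. The genuine difference is in (c) and (d). The paper proves (c) by passing to a generic hyperplane section $\Gamma\subset\P^2$, using Proposition \ref{pHyperplaneSection} together with Green's identity $\deg C=\deg\Gamma=\sum_i\lambda_i$, and it obtains (d) purely by citation to Green; you instead compute the Hilbert function of $S/\gin(I_C)$ directly by counting standard monomials column by column, arriving at $H(t)=(\#\circ)\,t+\#\circ-\sum_{\circ}(i+j)+\sum_{0<f<\infty}f(i,j)$ for $t\gg0$, and then read off both (c) and (d) simultaneously by comparison with $dt+1-p_a$. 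This is a more self-contained and unified route, and it makes transparent why the first two rows are ignored in (d) (the unique $\circ$ at $(0,0)$ carries weight $-1$ and cancels the constant $+1$ of the Hilbert polynomial, while row-one $\circ$'s carry weight $0$). Its cost is an extra input the paper never needs to invoke explicitly: that for the reverse-lexicographic order the gin of the saturated ideal is saturated, hence has no minimal generator divisible by $x_3$ (Bayer--Stillman), without which the three-variable count of standard monomials would be invalid. You correctly flag this as the delicate point; your argument that the $\circ$-region is a staircase anchored at $(0,0)$ (so $(0,0)$ is indeed a $\circ$ for a one-dimensional scheme) closes the computation, and the worry about other $\circ$'s in row zero is vacuous since that row has a single entry.
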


\begin{proof}
The generic initial ideal $\gin(I_C)$ as in \cite[Remark after Theorem 1.27]{Green98} is by definition Borel-fixed 
\cite[Definition 1.26]{Green98} and therefore is closed under elementary moves \cite[Definition 1.24 and Proposition 1.25]{Green98}. In our situation this means that 
\begin{enumerate}[label=(\roman*)]
\item $x_0^ix_1^jx_2^k \in \gin(I_C), j>0 \implies x_0^{i+1}x_1^{j-1}x_2^k \in \gin(I_C)$ \label{i1to0}
\item $x_0^ix_1^jx_2^k \in \gin(I_C), k>0 \implies x_0^{i+1}x_1^{j}x_2^{k-1} \in \gin(I_C)$ \label{i2to0}
\item $x_0^ix_1^jx_2^k \in \gin(I_C), k>0 \implies x_0^{i}x_1^{j+1}x_2^{k-1} \in \gin(I_C)$ \label{i2to1}
\end{enumerate}
i.e. we can replace any variable in a monomial by one with lower index.

Property $\ref{i1to0}$ implies $\ref{iBorelFixed1})$ of the Proposition: firstly, $f(i+1, j-1)\le f(i, j)$ for all $j \ge 1$ and $f (i, j)\neq \infty$ (i.e., $\neq \circ$). Secondly, if $f(i,j)=\infty$, the property claimed in $\ref{iBorelFixed1})$ of the Proposition is vacuously true. 
Properties $\ref{i2to0}$ and $\ref{i2to1}$ imply $\ref{iBorelFixed2})$ of the Proposition if $f(i,j)\neq \infty$ (i.e., $f(i,j)\neq \circ$) and $f (i,j) \neq 0$ (i.e., $f(i,j) \neq \bullet$). In the case $f(i,j)=\infty$, it is vacuously true that $f(i+1, j)$ and $f(i, j+1)$ are both not bigger than $f(i,j)$. If $f(i,j)=0$, then also $f(i+1, j)= f(i, j+1)=0$ because $\gin (I_C)$ is an ideal. 

\medskip

For \ref{iDegree}) of the Proposition let $\Gamma$ be a generic hyperplane section of $C$. We then have
\[
	\deg(C) = \deg(\Gamma) = \sum_i \lambda_i(\Gamma) = \sum_i \lambda_i(C)
\]
where the second equality holds by \cite[the discussion after Definition 4.1]{Green98} and the third
equality follows from Proposition \ref{pHyperplaneSection}. 
Notice that the $\lambda_i$ are just the number of $\circ$'s in the down-right diagonals in any diagram satisfying \ref{iBorelFixed1}) and \ref{iBorelFixed2}). See \cite[the discussion following the Example after Lemma 4.2]{Green98}.
Hence $\sum_i \lambda_i(C)$ is just
the number of $\circ$'s in the diagram.

\ref{iArithmeticGenus}) is \cite[Remark before Example 4.21]{Green98}.

\ref{iHypersurface}) By Remark \ref{rGin}
\[
	f_C(i,j) = k < \infty \iff x_0^ix_1^jx_2^k \in \gin(I_C).
\]
Now by \cite[Theorem 1.27]{Green98} and the definition of $\gin(I_C)$ there exists a linear transformation $g$ of $\P^3$ and a homogeneous polynomial $f \in I_C$ such that $x_0^ix_1^jx_2^k$ is  the initial term of $g(f)$. Since $f$ is homogeneous we have
\[
	\deg f = \deg g(f) = \deg \initialTerm(g(f)) = \deg x_0^ix_1^jx_2^k = i+j+k.
\]
\end{proof}

\begin{example} \label{eDeg5g2}
The diagram
\begin{center}
\begin{tabular}{ccccccccc}
\\
	&&&& $\circ$ \\
	&&& $\circ$ && $\circ$\\
	&& $\bullet$ && 1 && $\circ$\\
	&$\bullet$ && $\bullet$ && $\bullet$ && $\circ$  \\
	$\bullet$ && $\bullet$ && $\bullet$ && $\bullet$ && $\bullet$ \\
	\\
\end{tabular}
\end{center}
has $\lambda_0 = 4$ and $\lambda_1=1$. All other $\lambda_i$ are $0$. 
It satisfies the conditions $\ref{iBorelFixed1})$ and $\ref{iBorelFixed2})$ of Proposition \ref{pSpaceCurveDiagram}.
Every scheme $C \subset \P^3$ of dimension $1$ with this diagram
has degree $5$ and arithmetic genus $2$. Furthermore it lies on a (not necessarily irreducible) quadric hypersurface.

Notice that diagrams are, by their definition, infinite arrays, but if one horizontal row consists entirely of $\bullet$'s, by Proposition \ref{pSpaceCurveDiagram} \ref{iBorelFixed2}), all subsequent horizontal rows consist of $\bullet$'s, too, so we usually don't depict them.

The generic hyperplane section $\Gamma \subset \P^2$ of  a 
scheme $C \subset \P^3$ as above, has diagram

\begin{center}
\begin{tabular}{ccccccccc}
\\
	&&&& $\circ$ \\
	&&& $\circ$ && $\circ$\\
	&& $\bullet$ && $\bullet$ && $\circ$\\
	&$\bullet$ && $\bullet$ && $\bullet$ && $\circ$  \\
	$\bullet$ && $\bullet$ && $\bullet$ && $\bullet$ && $\bullet$ . \\
	\\
\end{tabular}
\end{center}
\end{example}

\begin{remark}
Notice that Proposition \ref{pSpaceCurveDiagram} \ref{iArithmeticGenus})  gives an upper bound for the arithmetic genus if one only considers the arrangement of $\circ$'s, i.e. the diagram of a generic hyperplane section. For example, every $C \subset \P^3$
with arrangement of $\circ$'s as in Example \ref{eDeg5g2} has arithmetic genus at most $3$.
\end{remark}

\begin{remark}
All diagrams as in Proposition \ref{pSpaceCurveDiagram} can be realised by monomial ideals. Thus, in order to get some restriction on the number of possible
diagrams, one has to assume some additional conditions. In our case the condition will be that $C$ lies on an irreducible cubic surface.
\end{remark}

\begin{proposition} \label{pSecantLine}
Let $\Gamma \subset \P^2$ be a finite scheme. If $\lambda_0(\Gamma) > \lambda_1(\Gamma)+2$ then there exists a line
$L\subset \P^2$ such that  $L \cap \Gamma$ contains a
scheme of length $\lambda_0(\Gamma)$.
\end{proposition}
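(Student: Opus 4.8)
The plan is to reduce the statement to the behaviour of the first-difference (Castelnuovo) function of $\Gamma$ at the top of its support, and then to extract the secant line from a flat stretch of that function. First I would unwind $\lambda_0$ and $\lambda_1$ in terms of the generic initial ideal. By the analogue for finite schemes $\Gamma\subset\P^2$ of Remark \ref{rGin}, the entry of $\Delta(\Gamma)$ at position $(i,j)$ is a $\circ$ precisely when $x_0^ix_1^j\notin\gin(I_\Gamma)$. Since $\gin(I_\Gamma)$ is Borel-fixed, membership $x_0^ix_1^j\in\gin(I_\Gamma)$ is, for fixed $i$, upward closed in $j$ (multiply by $x_1$), so
\[
\lambda_0=\min\{\,j : x_1^{\,j}\in\gin(I_\Gamma)\,\},\qquad
\lambda_1=\min\{\,j : x_0x_1^{\,j}\in\gin(I_\Gamma)\,\}.
\]
Write $n:=\lambda_0-1$ and let $\gamma(t)$ denote the number of $\circ$'s in the horizontal row $i+j=t$ of $\Delta(\Gamma)$; this is the Castelnuovo function of $\Gamma$, i.e. the first difference $h_\Gamma(t)-h_\Gamma(t-1)$ of the Hilbert function (so $\sum_t\gamma(t)=\deg\Gamma$, in accordance with Proposition \ref{pSpaceCurveDiagram}). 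Because $x_1^{\,t}\notin\gin(I_\Gamma)$ for every $t\le n$, the position $(0,t)$ always carries a $\circ$ in these rows, so $\gamma(t)\ge 1$ for $0\le t\le n$, while $\gamma(t)=0$ for $t\ge\lambda_0$.

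Next I would feed in the hypothesis $\lambda_0>\lambda_1+2$, i.e. $\lambda_1\le n-2$. Then $x_0x_1^{\lambda_1}\in\gin(I_\Gamma)$, and upward closedness gives $x_0x_1^{\,t-1}\in\gin(I_\Gamma)$ whenever $t-1\ge\lambda_1$, in particular for $t=n$ and $t=n-1$; applying the elementary (Borel) moves used in the proof of Proposition \ref{pSpaceCurveDiagram} to raise an $x_1$ to an $x_0$, every monomial $x_0^ax_1^{\,t-a}$ with $a\ge1$ then lies in $\gin(I_\Gamma)$ for those two values of $t$. Hence in the two topmost nonempty rows $t=n$ and $t=n-1$ the only $\circ$ is the one at $(0,t)$, so that
\[
\gamma(n)=\gamma(n-1)=1,\qquad \gamma(t)\ge1\ \ (0\le t\le n),\qquad \gamma(t)=0\ \ (t>n).
\]
I would stress that this is exactly where the strict inequality $\lambda_0>\lambda_1+2$ (rather than $\ge$) is indispensable: with $\lambda_1=n-1$ one only obtains $\gamma(n)=1$ while $\gamma(n-1)\ge2$, matching the fact that four general points (where $\lambda_0=3$, $\lambda_1=1$) carry no trisecant.

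The heart of the matter, and the step I expect to be the main obstacle, is to convert the flat stretch $\gamma(n-1)=\gamma(n)=1$ at the very top of the Castelnuovo function into an honest line. This is the content of the connectedness theorem for the Castelnuovo function of a zero-dimensional planar scheme (Davis; equivalently the Gruson--Peskine analysis of the numerical character): if $\gamma$ is constant equal to $d$ on two consecutive degrees at the top of its support, then there is a curve $B$ of degree $d$ which is a fixed component of the degree-$n$ forms through $\Gamma$ and along which $\Gamma$ splits, the Castelnuovo functions adding as $\gamma(t)=\gamma_{\Gamma\cap B}(t)+\gamma_{\Gamma'}(t-d)$ with $\Gamma'$ the residual scheme. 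For $d=1$ the curve $B$ is a line $L$. The difficulty is genuine: $\gin(I_\Gamma)$ is computed in generic coordinates in which this special secant line is invisible as a coordinate object, so passing from the numerical plateau at height $1$ to an actual linear fixed component is where the real work lies (one reproves it, to stay inside Green's framework, from the persistence of the rightmost $\circ$'s forced by the Borel-fixed structure). Granting the splitting, the length count is immediate: $\gamma_{\Gamma\cap L}$ is the Castelnuovo function of a subscheme of $L\cong\P^1$, hence equals $1$ on $\{0,1,\dots,\deg(L\cap\Gamma)-1\}$ and $0$ beyond, so comparing with $\gamma(t)\ge1$ for $0\le t\le n$ and $\gamma(t)=0$ for $t>n$ forces $\deg(L\cap\Gamma)=n+1=\lambda_0$. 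Thus $L\cap\Gamma$ contains a subscheme of length $\lambda_0$, which is the assertion.
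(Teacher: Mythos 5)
Your argument is correct, and it ends up in essentially the same place as the paper: both proofs reduce the statement to a classical decomposition theorem for zero-dimensional subschemes of $\P^2$ and do not reprove that theorem. The paper's proof is a two-line citation of the Ellia--Peskine theorem (\cite[Theorem 4.4]{Green98}), which is phrased directly in terms of the invariants $\lambda_i$ and, under a gap $\lambda_0>\lambda_1+2$ (with the degenerate case $\lambda_1=0$ handled separately by noting $\Gamma$ then lies on a line of degree $\lambda_0=\deg\Gamma$), immediately produces a length-$\lambda_0$ subscheme on a line. You instead translate the hypothesis into the statement that the Castelnuovo function has the plateau $\gamma(n-1)=\gamma(n)=1$ at the top of its support --- your Borel-fixedness computation here is accurate, and your observation that this is exactly where the strict inequality enters (four general points being the boundary case) is a nice sanity check --- and then invoke Davis' decomposition theorem to extract the line and do the length count. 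The two routes are equivalent in substance; yours buys a cleaner conceptual picture of \emph{why} the gap in the $\lambda_i$ forces a long secant, at the cost of an extra translation step, while the paper's citation is tailor-made for the $\lambda$-language of Green's framework and needs no translation. The one caveat is that your "one reproves it from the persistence of the rightmost $\circ$'s" is asserted rather than carried out, so the crux of your proof, like the paper's, rests on an external theorem; by the paper's own standard this is not a gap.
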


\begin{proof}
If $\lambda_1>0$, this is a special case of a Theorem by Ellia and Peskine \cite[Theorem 4.4]{Green98}. If $\lambda_1=0$,
then $f_\Gamma(1,0)=\bullet$ and all of $\Gamma$ lies on a line. Since $\deg \Gamma = \lambda_0$ in this case, the same conclusion holds.
\end{proof}

\begin{example}
For $\Gamma \subset \P^2$ in the Example \ref{eDeg5g2} we have $\lambda_0=4$, $\lambda_1 = 1$. 
In particular, the conditions of the Proposition \ref{pSecantLine} are satisfied and we have a line $L \subset \P^2$ that contains $4$ of the $5$ points of $\Gamma$ (counted with multiplicity).
\end{example}

%\begin{corollary} \label{cPlaneCurve}
%Let $C \subset \P^2$ a scheme of dimension $1$. If $\lambda_0(C) > \lambda_1(C) + 2$ then $C$ contains
%a plane curve of degree $\lambda_0(C)$. {\color{red} no. It should say: plane curve of degree $\lambda_0$ or higher.}
%{\color{red} DO WE ASSUME THAT $C$ IS CONTAINED IN A SMOOTH SURFACE AWAY FROM FINITELY MANY POINTS??? Otherwise, I think it is false (take two skew lines in $\P^3$, each with multiple structure given by ideals squared in $\P^3$; this should have degree $6$, a generic plane section contains a subscheme of length $4$ that lies on a line, but $C$ contains no planar degree $4$ component whichever way one looks at it.)}
%\end{corollary}

%\begin{proof}
%By Proposition \ref{pSecantLine} every general hyperplane $\P^2 \subset \P^3$ contains a $\lambda_0(C)$-secant line {\color{red} at least a $\lambda_0$ secant, could also be a higher secant...}
%of $C$. This can happen only if $C$ contains a plane curve of degree $\lambda_0(C)$. {\color{red} I think this argument should have more details. At least we are using $\lambda_0 \ge 3$ since otherwise it is just not true}
%\end{proof}

\begin{proof}[Proof of Theorem \ref{tSexticCurves}]
Let $C \subset \P^3$ be a scheme of dimension $1$ and degree $6$. We consider the diagram of a generic hyperplane section 
$\Gamma$ of $C$. By Proposition \ref{pHyperplaneSection} and Propostion \ref{pSpaceCurveDiagram}
its diagram $\Delta(\Gamma)$ will have six $\circ$'s. There are only 4 possible arrangements of  six circles that satisfy the conditions of 
Proposition \ref{pSpaceCurveDiagram}:

\begin{enumerate}
\item \begin{tabular}{ccccccccccccc}
\\
	&&&&&&$\circ$ \\
	&&&&&$\bullet$ && $\circ$\\
	&&&&$\bullet$&& $\bullet$&& $\circ$\\
	&&&$\bullet$&&$\bullet$&&$\bullet$&& $\circ$  \\
	&&$\bullet$&&$\bullet$&&$\bullet$&&$\bullet$&& $\circ$ \\
	&$\bullet$&&$\bullet$&&$\bullet$&&$\bullet$&&$\bullet$&& $\circ$ \\
	$\bullet$&&$\bullet$&&$\bullet$&&$\bullet$&&$\bullet$&&$\bullet$&& $\bullet$ \\
	\\
\end{tabular}

\item \begin{tabular}{ccccccccccccc}
\\
	&&&&&&$\circ$ \\
	&&&&&$\circ$ && $\circ$\\
	&&&&$\bullet$&& $\bullet$&& $\circ$\\
	&&&$\bullet$&&$\bullet$&&$\bullet$&& $\circ$  \\
	&&$\bullet$&&$\bullet$&&$\bullet$&&$\bullet$&& $\circ$ \\
	&$\bullet$&&$\bullet$&&$\bullet$&&$\bullet$&&$\bullet$&& $\bullet$ \\
	\\
\end{tabular}

\item \begin{tabular}{ccccccccccccc}
\\
	&&&&&&$\circ$ \\
	&&&&&$\circ$ && $\circ$\\
	&&&&$\bullet$&& $\circ$&& $\circ$\\
	&&&$\bullet$&&$\bullet$&&$\bullet$&& $\circ$  \\
	&&$\bullet$&&$\bullet$&&$\bullet$&&$\bullet$&& $\bullet$ \\
	\\
\end{tabular}

\item \begin{tabular}{ccccccccccccc}
\\
	&&&&&&$\circ$ \\
	&&&&&$\circ$ && $\circ$\\
	&&&&$\circ$&& $\circ$&& $\circ$\\
	&&&$\bullet$&&$\bullet$&&$\bullet$&& $\bullet$  \\
	\\
\end{tabular}

\end{enumerate}

Case $a)$ has $\lambda_0=6$ and $\lambda_1=0$. By Proposition \ref{pSecantLine}, a general plane section of $C$ contains a scheme of length $6$ lying on a line $L$. If $C$ also lies on an irreducible cubic surface $S$, each such $L$ has to be contained in $S$ for degree reasons. The family of such lines has dimension at least $2$ since there is a $3$ dimensional family of planes in $\P^3$, and each $L$ as above lies in a pencil of such hyperplanes. But this is a contradiction since no irreducible cubic surface contains a $2$-dimensional family of lines.

Case $b)$ has $\lambda_0=5$ and $\lambda_1=1$. By Proposition \ref{pSecantLine}, a general plane section of $C$ contains a scheme of length $5$ lying on a line $L$. If $C$ also lies on an irreducible cubic surface $S$, each such $L$ has to be contained in $S$ for degree reasons again. This leads to the same contradiction as in the preceding case.

For curves with the arrangement $c)$ we get $p_a \le 0+0+1+1+2=4$ by Proposition \ref{pSpaceCurveDiagram} \ref{iArithmeticGenus}).

For curves with the arrangement $d)$ we get $p_a \le 0+0+1+1+1=3$ by Proposition \ref{pSpaceCurveDiagram} \ref{iArithmeticGenus}).

It follows that $C$ can have arithmetic genus at most $4$ if it lies on an irreducible cubic hypersurface.
\end{proof}

\begin{proposition}\label{pCompleteIntersection}
Every scheme $C \subset \P^3$ of dimension $1$, degree $6$ and arithmetic
genus $4$ that is contained in an irreducible cubic is a complete intersection of type $(2,3)$.
\end{proposition}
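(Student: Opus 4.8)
The plan is to rerun the generic--initial--ideal analysis of the proof of Theorem \ref{tSexticCurves} and then simply read a quadric off the diagram. First I would recall from that proof that for a one--dimensional scheme $C\subset\P^3$ of degree $6$ lying on an irreducible cubic, the arrangement of $\circ$'s in the diagram $\Delta(\Gamma)$ of a generic hyperplane section $\Gamma$ must be one of the four listed there. Arrangements $a)$ and $b)$ are excluded because by Proposition \ref{pSecantLine} they produce a line meeting $\Gamma$ in length $6$ resp.\ $5$, which gives a two--dimensional family of lines on the irreducible cubic, a contradiction; and arrangement $d)$ forces $p_a(C)\le 3$ by Proposition \ref{pSpaceCurveDiagram} \ref{iArithmeticGenus}). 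Since we assume $p_a(C)=4$, the only surviving possibility is arrangement $c)$.

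Next I would exploit that arrangement $c)$ attains the genus bound $p_a\le 4$ with equality. By Proposition \ref{pSpaceCurveDiagram} \ref{iArithmeticGenus}) the circle contribution to the genus of any space curve with this $\circ$--arrangement is exactly $1+1+2=4$ (one for each of the two $\circ$'s in the third row and two for the single $\circ$ in the fourth row), and the genus equals this contribution minus the sum of all numerical entries of $\Delta(C)$. Hence $p_a(C)=4$ forces $\Delta(C)$ to have no numerical entries at all, so $\Delta(C)$ coincides with the arrangement $c)$ diagram, consisting only of $\bullet$'s and $\circ$'s; in particular $f_C(2,0)=0$. The key observation is now that the entry $f_C(2,0)=0<\infty$ means, by Proposition \ref{pSpaceCurveDiagram} \ref{iHypersurface}), that $C$ lies on a hypersurface of degree $2+0+0=2$, i.e.\ there is a quadric $Q\supset C$.

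Together with the given irreducible cubic $S\supset C$, and since $\deg Q=2<3=\deg S$ with $S$ irreducible, $Q$ does not contain $S$, so $Y:=S\cap Q$ is a genuine complete intersection curve of type $(2,3)$. Such a $Y$ is Cohen--Macaulay of degree $6$ and arithmetic genus $1+\tfrac12\cdot 6\cdot(2+3-4)=4$. To finish I would identify $C$ with $Y$: both are one--dimensional of degree $6$ and arithmetic genus $4$, hence share the Hilbert polynomial $P(n)=6n-3$. The closed immersion $C\hookrightarrow Y$ gives
\[
0 \to \KK \to \OO_Y \to \OO_C \to 0,
\]
so $\KK$ has identically vanishing Hilbert polynomial. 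Being supported on the curve $Y$, a nonzero $\KK$ would be either one--dimensional (forcing a nonzero leading term) or zero--dimensional of positive length (forcing a nonzero constant); in either case its Hilbert polynomial would be nonzero. Therefore $\KK=0$ and $C=Y$ is a complete intersection of type $(2,3)$.

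The step I expect to require the most care is the passage from $p_a(C)=4$ to the conclusion that $\Delta(C)$ has no numerical entries: this uses both that the $\circ$--arrangement is forced to be $c)$ and that the circle contribution of this particular arrangement equals the target genus $4$ exactly, leaving no room for positive numerical entries in Proposition \ref{pSpaceCurveDiagram} \ref{iArithmeticGenus}). Once this is pinned down, extracting the quadric from $f_C(2,0)=0$ and the final Hilbert--polynomial comparison are both formal.
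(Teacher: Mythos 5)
Your proposal is correct and follows essentially the same route as the paper: the generic initial ideal analysis forces arrangement $c)$ with no numerical entries, whence $f_C(2,0)=0$ puts $C$ on a quadric, and then $C$ must coincide with the complete intersection of that quadric with the given irreducible cubic. The only difference is that you spell out the paper's closing phrase ``for degree reasons'' via an explicit Hilbert polynomial comparison between $C$ and $S\cap Q$, which is a welcome (and correct) elaboration of the same step rather than a different argument.
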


\begin{proof}
This also follows from the proof of Theorem \ref{tSexticCurves}. 
Indeed if $p_a=4$ and the sextic curve lies on an irreducible cubic hypersurface, the only possible arrangement
of circles in the diagram of a generic hyperplane section $\Gamma \subset \P^2$ would be $c)$. In this case the arithmetic genus of $C$ can only be equal to $4$ if no numbers occur in the diagram 
of $C$. So $C$ must also have diagram

\begin{center}
\begin{tabular}{ccccccccccccc}
\\
	&&&&&&$\circ$ \\
	&&&&&$\circ$ && $\circ$\\
	&&&&$\bullet$&& $\circ$&& $\circ$\\
	&&&$\bullet$&&$\bullet$&&$\bullet$&& $\circ$  \\
	&&$\bullet$&&$\bullet$&&$\bullet$&&$\bullet$&& $\bullet$ \\
	\\
\end{tabular}
\end{center}

\noindent
But then we have $f(2,0) = \bullet$ and therefore $C$ lies on a (not necessarily irreducible) quadric hypersurface. Since
$C$ also lies on an irreducible cubic, it must be a complete intersection $(2,3)$ for degree reasons.
\end{proof}


\begin{thebibliography}{9999999999}
\bibitem[ABBP16]{ABBP16}
Auel, A., B\"ohning, Chr., Graf v. Bothmer, H.-Chr., Pirutka, A., 
 \emph{Conic bundles with nontrivial unramified Brauer group over threefolds}, preprint (2016) arXiv:1610.04995v2 [math.AG]
 
 \bibitem[BB17-M2]{BB17-M2}
 B\"ohning, Chr, Graf v. Bothmer, H.-Chr., \emph{Macaulay2 files for Degenerations of Gushel-Mukai fourfolds, with a view towards irrationality proofs}, available at \href{http://www.math.uni-hamburg.de/home/bothmer/m2.html}{http://www.math.uni-hamburg.de/home/bothmer/m2.html}
 
\bibitem[BW79]{BW79}
Bruce, J., Wall, C., \emph{On the classification of cubic surfaces}, J. London
Math. Soc., II. Ser. \textbf{19} (1979), 245--256.

\bibitem[Cat81]{Cat81}
Catanese, F., 
\emph{Babbage's conjecture, contact of surfaces, symmetric
determinantal varieties and applications}, 
Invent. Math. \textbf{63}, (1981), 433--465
 
 \bibitem[CT-P16]{CT-P16}
Colliot-Th\'{e}l\`{e}ne, J.-L., Pirutka, A., 
\emph{Hypersurfaces quartiques de dimension 3 : non rationalit\'{e}
stable}, 
Ann. Sci. \'{E}cole Norm. Sup. (2) \textbf{49} (2016), 371--397

\bibitem[DK15-1]{DK15-1}
Debarre, O. \& Kuznetsov, A., \emph{Gushel-Mukai varieties: classification and birationalities}, preprint (2015), \href{http://arxiv.org/abs/1510.05448}{arXiv:1510.05448v2 [math.AG]}

\bibitem[DK16-1]{DK16-1}
Debarre, O. \& Kuznetsov, A., \emph{Gushel-Mukai varieties: linear spaces and periods}, preprint (2016), \href{http://arxiv.org/abs/1605.05648}{arXiv:1605.05648v1 [math.AG]}

\bibitem[Dol12]{Dol12}
Dolgachev, I., 
\emph{Classical Algebraic Geometry: A Modern View}, 
Cambridge University Press (2012)

\bibitem[EPW01]{EPW01}
Eisenbud, D., Popescu, S. \& Walter, C., \emph{Lagrangian subbundles and codimension 3 subcanonical subschemes}, Duke Math. J. \textbf{107} (3), (2001), 427--467

\bibitem[M2]{M2}
Grayson, D. R. \& Stillman, M. E.,
\emph{Macaulay2, a software system for research in algebraic geometry},
available at \url{http://www.math.uiuc.edu/Macaulay2/}

\bibitem[G-D94/1]{G-D94/1}
Gonzalez-Dorrego, M.R., \emph{Curves on a Kummer surface in $\P^3$, I.}, Math. Nachr. \textbf{165}, (1994), 133--158

\bibitem[G-D94/2]{G-D94/2}
Gonzalez-Dorrego, M.R., \emph{$(16,6)$ configurations and geometry of Kummer surfaces in $\P^3$}, Memoirs of the AMS no. \textbf{512}, (1994)

\bibitem[Green98]{Green98}
Green, Mark L., \emph{Generic Initial Ideals}, in: Six Lectures on Commutative Algebra, Elias, J. and Giral, J. M. and Mir\'o-Roig, R. M. and Zarzuela, S. eds., Birkh\"auser Basel (1998), 119--186
 
\bibitem[OGra06]{OGra06}
O'Grady, K.G., \emph{Irreducible symplectic 4-folds and Eisenbud-Popescu-Walter sextics}, Duke Math. J. \textbf{134} (1), (2006), 99--137

\bibitem[GH78]{GH78}
Griffiths, Ph., Harris, J., \emph{Principles of Algebraic Geometry}, John Wiley \& Sons, Inc. (1978)

\bibitem[Ha77]{Ha77}
Hartshorne, R., \emph{Algebraic Geometry}, Graduate Texts in Mathematics \textbf{52}, Springer-Verlag (1977)

\bibitem[Hu05]{Hu05}
Hudson, R.W.H.T., \emph{Kummer's quartic surface}, Cambridge University Press (1905); reissued in the Cambridge Mathematical Library series 1990

\bibitem[IKKR16]{IKKR16}
Iliev, A., Kapustka, G., Kapustka, M., Ranestad, K., \emph{Hyperk\"ahler fourfolds and Kummer surfaces}, preprint (2016), \href{https://arxiv.org/abs/1603.00403}{arXiv:1603.00403 [math.AG]}

\bibitem[IM11]{IM11}
Iliev, A. \& Manivel, L., \emph{Fano manifolds of degree ten and EPW sextics}, Annales scientifiques de l'\'{E}cole Normale Sup\'{e}rieure \textbf{44} (3), (2011), 393--426

\bibitem[Jess03]{Jess03}
Jessop, C.M., \emph{A treatise on the line complex}, Cambridge University Press (1903)

\bibitem[Kroe]{Kroe}
Kr\"oker, J., \emph{A simple Macaulay2 framework for finite field experiments}, available at \href{https://github.com/jakobkroeker/FiniteFieldExperiments.M2}{https://github.com/jakobkroeker/FiniteFieldExperiments.M2}

\bibitem[KP16]{KP16}
Kuznetsov, A. \& Perry, A., \emph{Derived categories of Gushel-Mukai varieties}, preprint (2016), \href{http://arxiv.org/abs/1605.06568}{arXiv:1605.06568v1 [math.AG]}

\bibitem[OSS80]{OSS80}
Okonek, Chr., Schneider, M., Spindler, H., \emph{Vector bundles on complex projective spaces}, Progress in Math. \textbf{3}, Birkh\"auser Boston (1980)

\bibitem[Pio06]{Pio06}
Piontkowski, J., \emph{Linear symmetric determinantal hypersurfaces}, 
Michigan Math. J. \textbf{54} (1) (2006), 117--146

\bibitem[To16]{To16}
Totaro, B., \emph{Hypersurfaces that are not stably rational}, J. Amer. Math. Soc. \textbf{29} (2016), 883--891

\bibitem[Voi15]{Voi15}
Voisin, C., \emph{Unirational threefolds with no universal codimension 2 cycle},  Invent.
Math. \textbf{201} (2015), 207--237
\end{thebibliography}
\end{document}